\DeclareMathOperator{\Tor}{Tor}
\DeclareMathOperator{\lcm}{lcm}
\DeclareMathOperator{\indm}{indm}
\def\NZQ{\mathbb}               % the font for N,Z,Q,R,C
\def\ZZ{{\NZQ Z}}
\def\FF{{\NZQ F}}
\def\ab{{\mathbf a}}
\def\xb{{\mathbf x}}
\let\Dirsum=\bigoplus
\newtheorem{theorem}{Theorem}[section]
\newtheorem{lemma}[theorem]{Lemma}
\newtheorem{proposition}[theorem]{Proposition}
\newtheorem{corollary}[theorem]{Corollary}
\newtheorem{conjecture}[theorem]{Conjecture}
\theoremstyle{remark}
\newtheorem{remark}[theorem]{Remark}
\newtheorem{notation}[theorem]{Notation}
\theoremstyle{definition}
\newtheorem{example}[theorem]{Example}
\newtheorem{definition}[theorem]{Definition}
\DeclareMathOperator{\reg}{reg}
\DeclareMathOperator{\aim}{aim}
\DeclareMathOperator{\mat}{mat}
\begin{document}
	%\color{blue}
\title{Squarefree powers of edge ideals of forests}

\author[N. Erey]{Nursel Erey}
\author[T. Hibi]{Takayuki Hibi}
\address{Nursel Erey, Gebze Technical University, Department of Mathematics, 41400 Kocaeli, Turkey}
\email{nurselerey@gtu.edu.tr}

\address{Takayuki Hibi, Department of Pure and Applied Mathematics, Graduate School of Information Science and Technology, Osaka University, Suita, Osaka 565--0871, Japan}
\email{hibi@math.sci.osaka-u.ac.jp}

\subjclass[2010]{05E40, 13D02, 05C05}

\keywords{edge ideal, powers of ideals, Castelnuovo-Mumford regularity, matching number, forest}

%\author{Nursel Erey }
%\date{\vspace{-5ex}} %to remove the date. 

\begin{abstract}
 Let $I(G)^{[k]}$ denote the $k$th squarefree power of the edge ideal of $G$. When $G$ is a forest, we provide a sharp upper bound for the regularity of $I(G)^{[k]}$ in terms of the $k$-admissable matching number of $G$. For any positive integer $k$, we classify all forests $G$ such that $I(G)^{[k]}$ has linear resolution. We also give a combinatorial formula for the regularity of $I(G)^{[2]}$ for any forest $G$.
\end{abstract}

\maketitle

\section{Introduction}

Let $G$ be a finite simple graph with the vertex set $V(G)=\{x_1,\dots ,x_n\}$ and the edge set $E(G)$. Let $\Bbbk$ be a field and let $S=\Bbbk[x_1,\dots ,x_n]$ be the polynomial ring in $n$ variables over $\Bbbk$. The \textit{edge ideal} of $G$, denoted by $I(G)$, is the monomial ideal generated by $x_ix_j$ such that $\{x_i,x_j\}\in E(G)$. Computation of Castelnuovo-Mumford regularity of edge ideals and their powers is a challenging problem in commutative algebra which has led to extensive literature. 

Matchings in graphs appeared in the context of bounding or computing regularity. For example, it is well known that the regularity $\reg(I(G))$ of edge ideal of $G$ is bounded below by $\indm(G)+1$ \cite{K} and above by $\mat(G)+1$ \cite{HVT} where $\indm(G)$ and $\mat(G)$ denote respectively the induced matching number and the matching number of the graph $G$. It is also known that such lower bound is attained when $G$ is a chordal graph \cite{HVT}. These bounds were generalized to powers of edge ideals in \cite{BBH, BHT}. In particular, for any positive integer $k$, the following inequalities hold:
\[2k+\indm(G)-1 \leq \reg(I(G)^k) \leq 2k+\mat(G)-1. \]
The authors of \cite{BHT} proved that the lower bound is attained when $G$ is a forest and it was conjectured in \cite{BBHsurvey} that such lower bound should be also attained by chordal graphs. 

In this article, we investigate squarefree powers of edge ideals. The $kth$ \textit{squarefree power} $I(G)^{[k]}$ of edge ideal of a graph $G$ is generated by the squarefree monomials in the $k$th ordinary power $I(G)^{k}$. If $k>\mat(G)$, then $I(G)^{[k]}=(0)$. The study of squarefree powers was initiated in \cite{BHZ} and continued in \cite{EHHM}. Our motivation to study such powers is twofold. Firstly, thanks to the Restriction Lemma (Lemma~\ref{restrict}) the regularity of $I(G)^{[k]}$ is bounded above by that of $I(G)^{k}$. This suggests that squarefree powers might be useful in the study of ordinary powers. For instance, if the $k$th squarefree power does not have linear resolution, then the $k$th ordinary power cannot have linear resolution either. The second part of our motivation comes from the fact that the generators of $I(G)^{[k]}$ correspond to the matchings in $G$ of size $k$. This makes a close connection between squarefree powers of edge ideals and the theory of matchings in graphs.

In this article, we introduce the concept of $k$-\textit{admissable matching} of a graph. A matching $M$ is called $k$-admissable if there exists a partition of $M$ that satisfy certain conditions, see Definition~\ref{def: k-admissable matching}. A $1$-admissable matching is the same as an induced matching. Therefore, $k$-admissable matchings can be seen as generalization of induced matchings. The $k$-\textit{admissable matching number} of $G$, denoted by $\aim(G,k)$, is the maximum size of a $k$-admissable matching. Our first main result (Theorem~\ref{thm: upper bound for forest}) gives an upper bound for the regularity of squarefree powers of edge ideals of forests:

\begin{theorem}
	If $G$ is a forest, then $\reg(I(G)^{[k]})\leq \aim(G,k)+k$ for every $1\leq k\leq \mat(G)$.
\end{theorem}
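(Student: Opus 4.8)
The plan is to induct on the number of vertices of $G$, with $k=1$ as the base case. There the assertion reads $\reg(I(G))\le\indm(G)+1=\aim(G,1)+1$, which holds because forests are chordal and the lower bound $\indm(G)+1$ is attained for chordal graphs \cite{HVT}. For the inductive step, fix a leaf $x$ of $G$ with unique neighbour $y$, set $e=\{x,y\}$ and $H=G\setminus\{x,y\}$, and write $I:=I(G)^{[k]}$. I would run the standard short exact sequence
\[0\longrightarrow \bigl(S/(I:x)\bigr)(-1)\xrightarrow{\ \cdot x\ } S/I\longrightarrow S/(I,x)\longrightarrow 0,\]
which yields $\reg(I)\le\max\{\reg(I:x)+1,\ \reg(I,x)\}$, and then bound each ideal on the right by squarefree powers of strictly smaller forests.

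First I would identify the two ideals combinatorially. Since $x$ is a leaf, a generator $u_M$ of $I$ is divisible by $x$ if and only if $e\in M$; killing $x$ therefore gives $(I,x)=(I(G\setminus x)^{[k]},x)$, so that $\reg(I,x)=\reg(I(G\setminus x)^{[k]})$. For the colon, a monomial lies in $I:x$ exactly when it is a multiple of some $u_M$ with $e\notin M$ (contributing $I(G\setminus x)^{[k]}$) or of $u_M/x=y\,u_{M\setminus e}$ with $e\in M$ (contributing $y\,I(H)^{[k-1]}$). Absorbing those generators of $I(G\setminus x)^{[k]}$ that are divisible by some $y\,u_{M'}$—precisely the ones coming from matchings through $y$—one arrives at the clean description
\[I:x \;=\; I(H)^{[k]} \;+\; y\,I(H)^{[k-1]},\]
in which $y$ does not occur in $I(H)$ and $I(H)^{[k]}\subseteq I(H)^{[k-1]}$.

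Next I would bound $\reg(I:x)$ by splitting in the fresh variable $y$. For ideals $A\subseteq B$ of $\Bbbk[V(H)]$ and a new variable $y$, the short exact sequence associated to $y$ gives $(A+yB):y=B$ and $(A+yB,y)=(A,y)$, whence $\reg(A+yB)\le\max\{\reg(A),\reg(B)+1\}$. Taking $A=I(H)^{[k]}$ and $B=I(H)^{[k-1]}$ (nonzero in our range, since $e$ is pendant forces $k\le\mat(G)\le\mat(H)+1$) and invoking the induction hypothesis on $H$ gives $\reg(I:x)\le\max\{\aim(H,k)+k,\ \aim(H,k-1)+k\}$, with the first term dropped when $k>\mat(H)$. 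Combining this with $\reg(I,x)=\reg(I(G\setminus x)^{[k]})\le\aim(G\setminus x,k)+k$ from the induction hypothesis on $G\setminus x$, the exact sequence reduces the theorem to three purely combinatorial inequalities:
\[\aim(G\setminus x,k)\le\aim(G,k),\qquad \aim(H,k)+1\le\aim(G,k),\qquad \aim(H,k-1)+1\le\aim(G,k).\]

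The first inequality is monotonicity of $\aim$ under passage to an induced subgraph, which should follow directly from Definition~\ref{def: k-admissable matching}, as a $k$-admissible matching of $G\setminus x$ together with its witnessing partition is still one in $G$. The remaining two are the crux: they assert that adjoining the leaf edge $e$ to a maximum $k$-admissible (respectively $(k-1)$-admissible) matching of $H$ yields a $k$-admissible matching of $G$ of size one greater. I expect this to be the main obstacle, since one must produce an explicit partition of $M\cup\{e\}$ and verify the admissibility conditions. Here the forest hypothesis is used essentially: because $x$ is a leaf and $y\notin V(H)$, the edge $e$ meets the rest of $M$ only through neighbours of $y$, which should let $\{e\}$ be taken as its own block (or appended to a suitable block) without breaking the conditions. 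Checking this against the precise definition—in particular seeing that genuine $k$-admissibility, rather than mere inducedness, is what creates room for the extra pendant edge—is where the real work lies.
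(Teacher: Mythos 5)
Your algebraic reductions are correct: the identifications $(I,x)=I(G\setminus x)^{[k]}+(x)$ and $I:x=I(H)^{[k]}+y\,I(H)^{[k-1]}$ hold, and so does the splitting bound $\reg(A+yB)\le\max\{\reg(A),\reg(B)+1\}$. The gap is that the combinatorial inequalities you defer to the end are not merely ``the real work''---two of them are false. For an arbitrary leaf, the inequality $\aim(H,k-1)+1\le\aim(G,k)$ fails: this is exactly the content of the remark following Lemma~\ref{lem: aim bound for removing distance leaf} in the paper. Concretely, for the tree of Figure~\ref{graph} with $x=x_1$, $y=x_2$, $k=2$, the graph $H=G-\{x_1,x_2\}$ is a pair of disjoint edges, so $\aim(H,1)=2$, while $\aim(G,2)=2$ (the unique matching of size $3$ admits no valid partition: $\{x_1,x_2\}$ forms a gap with neither of the other two edges, forcing a single block of size $3>r+k-1$ with $r=1$). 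The paper repairs this particular failure by working with a \emph{distant} leaf (Lemma~\ref{lem:distant leaf exists}), for which the level-$(k-1)$ increment does hold (Lemma~\ref{lem: aim bound for removing distance leaf}). But your other inequality $\aim(H,k)+1\le\aim(G,k)$ fails \emph{even for distant leaves}: in the same tree take $x=x_4$, $y=x_3$, $k=2$; then $H$ is the path on $x_1,x_2,x_5,x_6$, whose full matching is a single block saturating condition (4) of Definition~\ref{def: k-admissable matching}, so $\aim(H,2)=2=\aim(G,2)$. At level $k\to k$ there is no slack in the inequality $\sum|M_i|\le r+k-1$, so the pendant edge cannot be absorbed; at level $k-1\to k$ there is exactly one unit of slack, which is why Lemma~\ref{lem: aim bound for removing distance leaf} works and your stronger claim does not.

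In fact the defect is in the decomposition itself, not only in the combinatorics: on the Figure~\ref{graph} tree one computes $\reg(I(G)^{[2]}:x)+1=5$ for \emph{every} leaf $x$ (the colon ideal always contains a degree-$4$ minimal generator coming from $I(H)^{[2]}$), while $\reg(I(G)^{[2]})=\aim(G,2)+2=4$; so the bound $\reg(I)\le\max\{\reg(I:x)+1,\reg(I,x)\}$ overshoots no matter how sharply you estimate the two pieces. The root cause is that coloning by the \emph{variable} $x$ keeps the exponent-$k$ ideal $I(H)^{[k]}$ alive with a twist of $1$, with no drop in generating degree to compensate. The paper's proof is engineered to avoid this term: it colons by the degree-two monomial $xy$, so that only $I(G-\{x,y\})^{[k-1]}$ appears (Lemma~\ref{lem:colon by leaf edge}) and only the level-$(k-1)$ increment is needed; the complementary ideal $I(G)^{[k]}+(xy)$ is then handled not through $G\setminus x$ but by successively adjoining all leaf edges $x_1y,\dots,x_ry$ at $y$ and finally splitting at the vertex $y$ itself (and at the unique non-leaf neighbor $z$ of $y$, if present), at which stage the graphs that occur ($G-y$, $G-\{y,z\}$, $G-\{x_1,\dots,x_r\}$) are gap-separated from the edge $\{x_1,y\}$, so only monotonicity of $\aim$ under induced subgraphs and the easy extension of Remark~\ref{lem: adding 1 at the end} are required. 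Some restructuring of this kind is unavoidable before your induction can close.
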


In Theorem~\ref{thm: second power formula} we show that the upper bound above is attained when $k=2$:

\begin{theorem}
		If $G$ is a forest with $\mat(G)\geq 2$, then $\reg(I(G)^{[2]})=\aim(G,2)+2$.
\end{theorem}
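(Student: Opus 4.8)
The plan is to prove the equality $\reg(I(G)^{[2]})=\aim(G,2)+2$ for a forest $G$ with $\mat(G)\geq 2$ by combining the already-established upper bound with a matching lower bound. The upper bound $\reg(I(G)^{[2]})\leq\aim(G,2)+2$ is a direct specialization of Theorem~\ref{thm: upper bound for forest} to $k=2$, so the entire content of the proof lies in establishing the reverse inequality
\[
\reg(I(G)^{[2]})\geq \aim(G,2)+2.
\]
My strategy is to exhibit, for a suitably chosen $2$-admissable matching $M$ of maximum size $r=\aim(G,2)$, a nonvanishing graded Betti number of $I(G)^{[2]}$ in homological degree forcing regularity at least $r+2$. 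Concretely, I would pass to the subgraph $H$ induced on the vertices covered by $M$; by the Restriction Lemma (Lemma~\ref{restrict}) the regularity of $I(H)^{[2]}$ is a lower bound for (or at least controllable in terms of) that of $I(G)^{[2]}$, which reduces the problem to analyzing the restriction to the $2$-admissable matching itself.

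The key steps, in order, would be the following. First, unwind Definition~\ref{def: k-admissable matching} in the case $k=2$ to get an explicit structural description: a maximum $2$-admissable matching $M$ comes with a partition into blocks, each of which induces a small, well-understood subgraph (for $k=2$ one expects the blocks to have size at most two, so the pieces are either single induced edges or pairs of edges satisfying a prescribed separation condition). Second, I would choose a monomial multidegree $\mathbf{a}$—essentially the product of the variables appearing in the edges of $M$, possibly with multiplicity reflecting the block structure—and compute the relevant reduced simplicial homology via Hochster's formula, showing that $\beta_{i,\mathbf a}(I(G)^{[2]})\neq 0$ for the homological index $i$ that yields regularity $r+2$. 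Third, I would verify that passing from the induced subgraph $H$ back to $G$ does not destroy this Betti number: since $G$ is a forest, the edges outside $M$ attach in a tree-like fashion, and I would argue (via the restriction/deletion-contraction behavior of squarefree powers, or via a Betti-splitting / mapping-cone argument) that the targeted nonzero Betti number survives.

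I anticipate two main obstacles. The sharpest difficulty is the combinatorial-to-homological translation in the second step: verifying that the chosen multidegree genuinely supports nonvanishing homology of the associated simplicial (or lcm-lattice) complex attached to $I(G)^{[2]}$ is delicate, because the generators of a squarefree power correspond to size-$2$ matchings rather than to single edges, so the relevant complex is built from pairs of disjoint edges and its topology is governed precisely by the admissability condition. I expect the proof to show that non-admissable configurations create "collapsing" that kills homology, so that $\aim(G,2)$ is exactly the threshold at which the homology persists—this is where the $2$-admissability definition must be used in full. The second, more technical obstacle is the inductive bookkeeping in the third step: one must ensure that among all maximum $2$-admissable matchings there is a choice for which the ambient forest structure lets the Betti number lift from $H$ to $G$, and I would handle this by inducting on the number of vertices of $G$, splitting off a leaf or an isolated edge and invoking the forest hypothesis so that the tree structure guarantees the needed vertex decomposition at each stage.
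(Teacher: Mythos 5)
Your skeleton coincides with the paper's: the upper bound is Theorem~\ref{thm: upper bound for forest} with $k=2$, and for the lower bound one restricts to the induced subgraph $H$ on the vertices of a $2$-admissable matching $M$ and exhibits a nonvanishing Betti number of $I(H)^{[2]}$ in the squarefree multidegree $\prod_{x\in e,\ e\in M}x$. Note, however, that your third step is a non-issue: Corollary~\ref{cor:restriction induced subgraph} already gives $b_{i,\ab}(I(H)^{[2]})\leq b_{i,\ab}(I(G)^{[2]})$ multidegree by multidegree, so nothing needs to be ``lifted'' from $H$ back to $G$ by induction, Betti splittings, or mapping cones; you spend your contingency planning on the one step that is automatic.

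The genuine gap is your second step, which is the entire content of the theorem and which you only describe as an expectation. What is needed is exactly the paper's Lemma~\ref{lem:2 admissable perfect betti number}: if $M$ is a $2$-admissable perfect matching of $H$, then $b_{|M|-1,2|M|}(S/I(H)^{[2]})\neq 0$. Two ideas make this provable, and both are absent from your plan. First, a structural reduction: unwinding Definition~\ref{def: k-admissable matching} for $k=2$ and a perfect matching that is not already an induced matching shows that $H$ has exactly $|M|+1$ edges --- it is the disjoint union of one path on four vertices and $|M|-2$ isolated edges, because the $2$-admissable sequence condition forces one block of size two and all other blocks of size one, while conditions (3) and (5) of the definition exclude any further edges. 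You gesture at this (``blocks of size at most two'') but never pin $H$ down, and without this the complex you propose to analyze has no fixed structure. Second, a base case plus a transfer mechanism: the paper does not compute homology via Hochster's formula here at all; it quotes the known nonvanishing result for induced (i.e.\ $1$-admissable) perfect matchings (Lemma~\ref{lem:oberwolfach}, from \cite{EHHM}) and transfers it along two short exact sequences --- first coloning out the leaf edge $x_1x_2$ (using Lemma~\ref{lem:colon by leaf edge}), then the vertex $x_2$ --- killing the unwanted Tor modules by the observation that a squarefree monomial ideal has nonzero Betti numbers only in squarefree multidegrees, and the ideals that appear involve too few variables to reach total degree $2|M|$. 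Your proposed direct computation of $\tilde{H}_{|M|-3}$ of an upper-Koszul complex is not wrong in principle for such a concrete $H$, but it is substantially harder than the exact-sequence route and you have not carried it out; as written, the proposal is a plan rather than a proof. (One further small correction: there is no need to consider multidegrees ``with multiplicity reflecting the block structure,'' since squarefree ideals have no Betti numbers outside squarefree degrees.)
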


Our second main result (Theorem~\ref{thm:characterize linear resolution}) gives a complete classification of forests $G$ for which $I(G)^{[k]}$ has linear resolution:

\begin{theorem}
Let $k$ be a positive integer and let $G$ be a forest with $\mat(G)\geq k$. Then $\reg(I(G)^{[k]})=2k$ if and only if $\aim(G,k)=k$.
\end{theorem}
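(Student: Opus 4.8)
The plan is to prove the two implications separately, treating the forward direction by contraposition. The backward implication is immediate: since $\mat(G)\geq k$ the ideal $I(G)^{[k]}$ is nonzero and is minimally generated in the single degree $2k$, so $\reg(I(G)^{[k]})\geq 2k$ always holds; and if $\aim(G,k)=k$ then Theorem~\ref{thm: upper bound for forest} gives $\reg(I(G)^{[k]})\leq \aim(G,k)+k=2k$. Combining the two inequalities forces equality, so no idea beyond the already established upper bound is needed for this direction.

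All the work lies in the contrapositive of the forward implication: assuming $\aim(G,k)=m\geq k+1$, I want the strict inequality $\reg(I(G)^{[k]})\geq 2k+1$. First I would reduce to the extremal size $m=k+1$. Given a $k$-admissable matching $M$ of size $m$, I would use its defining partition (Definition~\ref{def: k-admissable matching}) to select a sub-collection $M'\subseteq M$ of exactly $k+1$ edges that is still $k$-admissable; verifying that admissability is inherited by such a sub-selection is a short check carried out directly on the partition. Setting $H=G[V(M')]$, the induced subforest on the $2(k+1)$ endpoints of $M'$, the ideal $I(H)^{[k]}$ is the restriction of $I(G)^{[k]}$ to the variables supported on $V(M')$, so the Restriction Lemma (Lemma~\ref{restrict}) yields $\reg(I(G)^{[k]})\geq \reg(I(H)^{[k]})$. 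The goal then reduces to the self-contained statement that $\reg(I(H)^{[k]})\geq 2k+1$ for a forest $H$ on only $2(k+1)$ vertices carrying a spanning $k$-admissable matching of size $k+1$. It is essential that this last bound be obtained by a direct construction rather than by invoking the theorem on $H$, which would be circular.

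The heart of the proof is therefore this direct non-linearity statement for the bounded extremal forests $H$. The clean model case is when $M'$ is an induced matching, so that $H=(k+1)K_2$; here an explicit resolution computation gives $\reg(I((k+1)K_2)^{[k]})=2k+1$, which anchors the argument. In general $M'$ need not be induced, and the extra edges of $H$ permitted by the partition condition enlarge the family of possible $H$. I would control this family by induction on the number of edges of $H$, deleting a suitable leaf edge and reading off the regularity from the short exact sequence relating $I(H)^{[k]}$, the squarefree power of the smaller forest, and the associated colon ideal, with the cases $k=1$ (the forest value $\reg(I(G))=\indm(G)+1$) and $k=2$ (Theorem~\ref{thm: second power formula}) serving as low-level anchors. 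The main obstacle, as I see it, is exactly this bookkeeping: one must convert the combinatorial partition witnessing $k$-admissability into a genuinely non-minimal-degree syzygy of $I(H)^{[k]}$, keeping the short exact sequences under control precisely when $H$ carries edges beyond the matching $M'$ itself. Neither the backward direction nor the restriction step presents real difficulty by comparison.
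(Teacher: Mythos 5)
Your backward implication and your reduction steps coincide with the paper's: the subset-inheritance check you describe is Lemma~\ref{lem:every nonempty subset is k-admissable}, and the passage to the induced subgraph $H$ on the $2(k+1)$ endpoints of a $k$-admissable matching of size $k+1$ uses Corollary~\ref{cor:restriction induced subgraph}, exactly as in the paper. The gap is in the only step carrying real content: the claim that $\reg(I(H)^{[k]})\geq 2k+1$ for every forest $H$ on $2(k+1)$ vertices with a spanning $k$-admissable matching $M$ of size $k+1$. You do not prove this; you propose to prove it by induction on the number of edges of $H$, deleting a leaf edge and ``reading off the regularity from the short exact sequence.'' But the short exact sequence technique (Lemma~\ref{lem: key lemma}) yields \emph{upper} bounds on regularity; to get the needed lower bound you must exhibit a nonvanishing Betti number $b_{1,2k+2}(I(H)^{[k]})\neq 0$, and in a long exact sequence of $\Tor$ this requires proving that an adjacent $\Tor$ group vanishes so that the class survives. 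That vanishing is precisely the bookkeeping you flag as ``the main obstacle'' and leave unresolved, and it is not routine: the paper carries out such a $\Tor$ argument only for $k=2$ (Lemma~\ref{lem:2 admissable perfect betti number}), and even there it relies on a structural claim (the extremal graph has exactly $r+1$ edges) that has no analogue for general $k$, where the parts of the admissable partition can be large subtrees contributing many extra edges to $H$. A further unaddressed point: after deleting a leaf edge $\{x,y\}\in M$, the colon ideal is $I(H)^{[k]}:(xy)=I(H-\{x,y\})^{[k-1]}$, so your induction mixes $k$ and the edge count, and $M\setminus\{e\}$ need not be $(k-1)$-admissable for an arbitrary choice of $e$ (compare Lemma~\ref{lem:aim(G,k) cannot exceed}), so even the setup of the induction requires care you do not supply. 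Your anchor case $H=(k+1)K_2$ is fine (it is Lemma~\ref{lem:oberwolfach}), but the general case does not reduce to it by your sketch.

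For comparison, the paper avoids $\Tor$ computations entirely at this point: it examines the upper-Koszul simplicial complex $K^{u_M}(I(H)^{[k]})$ at the multidegree $u_M=\prod_{e\in M}e$, identifies its facets with the pairs $\{x_i,x_j\}$ such that $H-\{x_i,x_j\}$ has a perfect matching, and uses the admissable partition together with a parity argument on perfect matchings (Lemmas~\ref{lem: connected components k admissable} and \ref{aysels lemma3}) to show the complex is disconnected (Lemma~\ref{lem: disconnected simplicial complex}); Hochster's formula then gives $b_{1,2k+2}(I(H)^{[k]})\neq 0$ directly. Until you either carry out the $\Tor$-vanishing bookkeeping for general $k$ or replace it by an argument of this kind, the forward implication remains unproven.
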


As a consequence of the above theorem, we show that for any forest $G$ and $1\leq k< \mat(G)$, if $I(G)^{[k]}$ has linear resolution, then $I(G)^{[k+1]}$ has linear resolution as well.

%----------------------------------------------
%   PRELIMINARIES
%----------------------------------------------
\section{Preliminaries}
\subsection{Definitions and notations}
Let $G$ be a finite simple graph with the vertex set $V(G)$ and the edge set $E(G)$. Given a vertex $x$ in $G$, we say $y$ is a \textit{neighbor} of $x$ if $\{x,y\}\in E(G)$. We denote the set of all neighbors of $x$ by $N_G(x)$. We set $N_G[x]=N_G(x)\cup \{x\}$. We say the \textit{degree} of $x$ in $G$ is $d$ if $x$ has exactly $d$ neighbors. A vertex of degree $0$ is called an \textit{isolated vertex}. A vertex of degree $1$ is called a \textit{leaf}. A \textit{complete graph} on $n$ vertices is denoted by $K_n$.

We say $H$ is a \textit{subgraph} of $G$ if $V(H)\subseteq V(G)$ and $E(H)\subseteq E(G)$. A subgraph $H$ of $G$ is called an \textit{induced subgraph}  if for any two vertices $x,y$ in $H$, $\{x,y\}\in E(H)$  if and only if $\{x,y\}\in E(G)$. For any $U\subseteq V(G)$, the induced subgraph of $G$ on $U$ is the graph with the vertex set $U$ and the edge set $\{\{x,y\} : x,y\in U \text{ and } \{x,y\} \in E(G)\}$. For any $U\subseteq V(G)$, we denote by $G-U$ the induced subgraph of $G$ on $V(G)\setminus U$.

A graph $G$ is called \textit{connected} if any two vertices of $G$ are connected by a path in $G$. A maximal connected subgraph of $G$ is called a \textit{connected component} of $G$. We say $G$ is a \textit{forest} if $G$ has no cycle subgraphs. A connected forest is called a \textit{tree}.

A \textit{matching} of $G$ is a collection of edges which are pairwise disjoint. The \textit{matching number} of $G$, denoted by $\mat(G)$, is defined by
\[\mat(G)=\max\{|M|: M \text{ is a matching of } G\}.\]
A matching $M=\{e_1,\dots ,e_k\}$ of $G$ is called an \textit{induced matching} of $G$ if the induced subgraph of $G$ on $\cup_{i=1}^ke_i$ consists of the edges $e_1,\dots ,e_k$. The \textit{induced matching number} of $G$, denoted by $\indm(G)$, is defined by
\[\indm(G)=\max\{|M|: M \text{ is an induced matching of } G\}.\]
Clearly, $\indm(G)\leq \mat(G)$ for any graph $G$. An induced matching of size $2$ is called a \textit{gap}. If $\{e_1,e_2\}$ is a gap in $G$, we say the edges $e_1$ and $e_2$ \textit{form a gap} in $G$. A matching $M$ of $G$ is called a \textit{perfect matching} if for every vertex $x$ of $G$, there is an edge $e\in M$ such that $x\in e$.

For any positive integer $n$, we denote $\{1,\dots ,n\}$ by $[n]$.

Let $G$ be a graph with the vertex set $V(G)=\{x_1,\dots ,x_n\}$. Let $\Bbbk$ be a field and let $S=\Bbbk[x_1,\dots ,x_n]$ be the polynomial ring in $n$ variables over $\Bbbk$. The \textit{edge ideal} of $G$, denoted by $I(G)$, is the monomial ideal defined by
\[I(G)=(x_ix_j : \{x_i,x_j\} \text{ is an edge of } G).\]

By abuse of notation, we will use an edge $e=\{x_i,x_j\}$ of $G$ interchangeably with the monomial $x_ix_j$. For any $1\leq k\leq \mat(G)$, we define the $k$th \textit{squarefree power} of the edge ideal of $G$ by
\[I(G)^{[k]}=(e_1\dots e_k : \{e_1,\dots ,e_k\} \text{ is a matching of } G).\] 
We set $I(G)^{[k]}=(0)$ when $k>\mat(G)$. For any homogeneous ideal $I\subset S$, the \textit{(Castelnuovo-Mumford) regularity} of $I$ is defined by
\[\reg(I)=\max\{j-i: b_{i,j}(I)\neq 0 \}\]
where $b_{i,j}(I)$ denote the \textit{graded Betti numbers} in the minimal graded free resolution of $I$. An ideal $I$ generated in degree $d$ is said to have a \textit{linear resolution} if $b_{i,i+j}(I)=0$ for all $j\neq d$.

\begin{figure}[hbt!]
	\centering
	\begin{tikzpicture}
	[scale=1.0, vertices/.style={draw, fill=black, circle, minimum size = 4pt, inner sep=0.5pt}, another/.style={draw, fill=black, circle, minimum size = 3.5pt, inner sep=0.1pt}]
	\node[another, label=below:{}] (a) at (0,0) {};
	\node[another, label=below:{}] (b) at (1, 0) {};
	\node[another, label=above:{}] (c) at (-1, 0) {};
	\node[another, label=below:{}] (d) at (1.5,0.75) {};
	\node[another, label=above:{}] (e) at (2,0) {};
	\node[another, label=below:{}] (f) at (1.5,-0.75) {};
	\node[another, label=below:{}] (g) at (-1.5,0.75) {};
	\node[another, label=below:{}] (h) at (-2,0) {};
	\node[another, label=below:{}] (i) at (-1.5,-0.75) {};
	\foreach \to/\from in {a/b, a/c, b/d, b/e, b/f, c/g, c/h,c/i}
	\draw [-] (\to)--(\from);
	\end{tikzpicture}
	\caption{\label{graph mat} A graph $G$ with $\indm(G)=\mat(G)=2$.}
\end{figure}
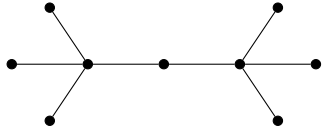

%------------------------------------
%  Background
%------------------------------------
\subsection{Background}
In this section, we collect some results that will be useful to prove our results. The following lemma shows the existence of a certain kind of leaf in forests. 
\begin{lemma}\cite[Proposition~4.1]{JK} \label{lem:jacques}
	Let $T$ be a forest containing a vertex of degree at least two. Then there exists a vertex $v$ with neighbors $v_1,\dots ,v_n$ where $n\geq 2$ and $v_1,\dots ,v_{n-1}$ have degree one.
\end{lemma}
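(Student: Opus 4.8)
The plan is to reduce to a single tree and then run the classical longest-path argument. Since a forest is a disjoint union of trees, the hypothesized vertex of degree at least two lies in some connected component $T'$; as distinct components contribute no edges, a vertex is a leaf of $T'$ exactly when it is a leaf of $T$, so it suffices to prove the statement for the tree $T'$. Moreover, a vertex of degree $\geq 2$ has two distinct neighbors $a,b$, and then $a,v,b$ is a path of length two, so a longest path in $T'$ has length $\ell \geq 2$.

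First I would fix a path $P : u_0, u_1, \ldots, u_\ell$ in $T'$ of maximum length $\ell$. Because $P$ cannot be extended beyond $u_0$, the endpoint $u_0$ must be a leaf, and $\ell \geq 2$ guarantees that both $u_1$ and $u_2$ exist with $u_1 \neq u_\ell$. I claim that $v := u_1$ is the desired vertex. Since $u_1$ is adjacent to both $u_0$ and $u_2$, its degree is at least $2$, so $n := \deg(u_1) \geq 2$.

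The heart of the argument is to show that every neighbor of $u_1$ other than $u_2$ is a leaf; listing the neighbors so that the last one is $u_2$ then yields the conclusion. So let $w$ be a neighbor of $u_1$ with $w \neq u_2$. If $w = u_0$, it is already a leaf. Otherwise I would first record that acyclicity forces $w \notin \{u_0, \ldots, u_\ell\}$: an equality $w = u_j$ with $j \geq 3$ would close the cycle $u_1 u_2 \cdots u_j u_1$ via the edge $u_j u_1 = w u_1$. Now suppose for contradiction that $w$ is not a leaf, so it has a neighbor $w' \neq u_1$. Acyclicity again forces $w' \notin P$ (an equality $w' = u_j$ would close the cycle $u_1 \cdots u_j w u_1$, while $w' = u_0$ is impossible since $u_0$ has unique neighbor $u_1 \neq w$), and $w' \neq w$ since the graph is simple. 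Hence $w', w, u_1, u_2, \ldots, u_\ell$ is a simple path of length $\ell + 1$, contradicting the maximality of $P$. Therefore $w$ is a leaf, which finishes the proof.

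The main obstacle is not conceptual but one of care: I must verify that the extended walk $w', w, u_1, \ldots, u_\ell$ has no repeated vertices, so that it is a genuine path strictly longer than $P$. This is precisely where the no-cycle hypothesis of a forest is used, and it must be invoked twice, once to locate $w$ off the path and once for $w'$.
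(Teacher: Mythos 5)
Your proof is correct. Note, however, that the paper itself offers no proof of this lemma: it is quoted verbatim from Jacques--Katzman \cite[Proposition~4.1]{JK}, so there is no in-paper argument to compare against, and a self-contained proof like yours is exactly what a reader would need. Your longest-path argument is the standard one for this fact: restrict to a connected component, take a maximum-length path $u_0,u_1,\dots,u_\ell$ with $\ell\geq 2$, and show $v=u_1$ works because any neighbor of $u_1$ other than $u_2$ that fails to be a leaf would let you build either a cycle or a strictly longer path. All the distinctness checks you perform (that $w$ and $w'$ lie off the path and are distinct) are the ones that matter, and they are handled correctly.

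One small point of care: your justification that $u_0$ is a leaf --- ``because $P$ cannot be extended beyond $u_0$'' --- is incomplete as stated, since non-extendability only rules out neighbors of $u_0$ \emph{off} the path; a neighbor $u_j$ with $j\geq 2$ would also block extension but would not make $u_0$ a leaf. That case is excluded by acyclicity (it would close the cycle $u_0u_1\cdots u_ju_0$), so the hypothesis you say is invoked twice is in fact invoked a third time here. This matters because you later use the leaf-ness of $u_0$ (its unique neighbor being $u_1$) to exclude $w'=u_0$. The fix is one line, so this is a gap in exposition rather than in substance.
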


\begin{remark}\cite[Remark~2.6]{BHT}\label{rk: adding variable}
	Let $R=\Bbbk[x_1,\dots ,x_n]$ and let $I\neq R$ be a homogeneous ideal in $R$. Let $y$ be a new indeterminate and let $S=R[y]$. Then $\reg(I)=\reg(I+(y))$.
\end{remark}
\begin{theorem}\cite[Theorem~4.7]{BHT}\label{thm:reg of forest}
	If $G$ is a forest, then $\reg(I(G)^k)=2k+\indm(G)-1$.
\end{theorem}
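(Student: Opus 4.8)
Since the inequality $\reg(I(G)^k)\geq 2k+\indm(G)-1$ holds for every graph (it is recorded in the introduction), the entire content of the statement is the reverse bound $\reg(I(G)^k)\leq 2k+\indm(G)-1$ for a forest $G$. The plan is to prove this upper bound by induction on $k$. The base case $k=1$ is exactly the chordal equality $\reg(I(G))=\indm(G)+1$, which holds because forests are chordal and the lower bound is attained for chordal graphs. For the inductive step it suffices to establish the one-step estimate $\reg(I(G)^k)\leq \reg(I(G)^{k-1})+2$, since substituting $\reg(I(G)^{k-1})=2(k-1)+\indm(G)-1$ gives precisely the target.

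The engine for the inductive step is a short exact sequence argument. By Lemma~\ref{lem:jacques} I fix a vertex $v$ whose neighbors $v_1,\dots,v_n$ (with $n\geq 2$) include the leaves $v_1,\dots,v_{n-1}$, and I set $e=vv_1$. The short exact sequence
\[0\longrightarrow \big(S/(I(G)^k:e)\big)(-2)\longrightarrow S/I(G)^k\longrightarrow S/(I(G)^k+(e))\longrightarrow 0\]
yields the standard estimate $\reg(I(G)^k)\leq\max\{\reg(I(G)^k:e)+2,\ \reg(I(G)^k+(e))\}$. The colon term is clean: since $v_1$ is a leaf, the only edge through $v_1$ is $e$, so any generator of $I(G)^k$ divisible by $v_1$ already carries the factor $e$, and a direct monomial check gives $(I(G)^k:e)=I(G)^{k-1}$. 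This branch therefore contributes $\reg(I(G)^{k-1})+2$, exactly the target by the inductive hypothesis. Everything is reduced to controlling the quotient ideal $I(G)^k+(e)=I(G-v_1)^k+(e)$.

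To bound the quotient I would route through two smaller forests, $G-v_1$ (one fewer vertex) and $G-N_G[v]$ (closed-neighborhood deletion), with the power dropped to $k-1$ in the latter. The decisive combinatorial input is the clean inequality $\indm(G-N_G[v])\leq\indm(G)-1$: any induced matching of $G-N_G[v]$ extends by the edge $e$ to an induced matching of $G$, because no vertex of $N_G[v]$ meets $G-N_G[v]$. Granting a reduction of the shape
\[\reg\big(I(G)^k+(e)\big)\leq\max\big\{\reg\big(I(G-v_1)^k\big),\ \reg\big(I(G-N_G[v])^{k-1}\big)+2\big\},\]
the inductive hypotheses on these smaller forests close the step: the first term is at most $2k+\indm(G-v_1)-1\leq 2k+\indm(G)-1$, and the second is at most $2(k-1)+\indm(G-N_G[v])-1+2\leq 2k+\indm(G)-2$.

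The main obstacle is producing exactly this reduction. Naive iterated splitting of $I(G)^k+(e)$ by single variables $v_1,v$ produces instead the deletion $G-\{v,v_1\}$ \emph{at the unchanged power} $k$, and since $\indm(G-\{v,v_1\})$ can equal $\indm(G)$ (already for $G=P_4$, where $G-\{v,v_1\}=K_2$), that bound is off by one. The crux is therefore to organize the colon computations so that removing the leaf edge $e$ genuinely consumes one factor of the power, forcing $G-N_G[v]$ and $I(\cdot)^{k-1}$ to appear; this is where the hypothesis that $v_1,\dots,v_{n-1}$ are leaves must be used essentially. An equivalent route is Banerjee's filtration $\reg(I(G)^k)\leq\max\{\reg(I(G)^k:m)+2(k-1),\ \reg(I(G)^{k-1})\}$ over the minimal generators $m$ of $I(G)^{k-1}$, which reduces the entire problem to the single inequality $\reg(I(G)^k:m)\leq\indm(G)+1$; there the colon $I(G)^k:m$ is the edge ideal of the even-connection graph attached to $m$, and the obstacle reappears as bounding its regularity by $\indm(G)+1$, exploiting that in a forest the connecting paths are unique. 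I expect this control of the even-connection (equivalently, the power-dropping colon) to be the hardest part of the argument.
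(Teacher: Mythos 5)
First, a point of comparison: the paper does not prove this statement at all --- it is quoted from \cite{BHT} (Theorem~4.7 there), so the only benchmark is the proof of Beyarslan--H\`a--Trung, which runs through Banerjee's theorem on colon ideals $I^{k}:m$ by minimal generators $m$ of $I^{k-1}$ and the structure of even-connection graphs. Measured against that, your proposal contains a genuine, and in fact self-acknowledged, gap. What you do have is sound: the reduction to the upper bound, the base case $k=1$ (forests are chordal, and chordal graphs attain the lower bound), and the identity $I(G)^k:(e)=I(G)^{k-1}$ for a leaf edge $e=vv_1$. That identity is correct and your ``direct monomial check'' does close: if $ue=e_{i_1}\cdots e_{i_k}m$ with no $e_{i_j}=e$, then since $e$ is the only edge at $v_1$ the variable $v_1$ must divide $m$, and after cancelling it either $v$ divides the remaining cofactor or $v$ lies in some $e_{i_j}=vw$, so in either case $u\in I(G)^{k-1}$. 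But the other branch of your short exact sequence is the entire theorem, and you do not prove it: the inequality $\reg\big(I(G)^k+(e)\big)\leq\max\big\{\reg\big(I(G-v_1)^k\big),\ \reg\big(I(G-N_G[v])^{k-1}\big)+2\big\}$ is only ``granted,'' and you yourself identify producing it as the main obstacle. Your diagnosis of why it is hard is accurate --- splitting off $v_1$ and then $v$ one variable at a time leaves $G-\{v,v_1\}$ at the unchanged power $k$, and already for $P_4$ and $k=2$ that estimate is off by one --- but no mechanism that actually consumes a factor of the power is supplied, so the induction never closes.

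The fallback you describe as ``an equivalent route'' --- Banerjee's filtration, reducing everything to $\reg(I(G)^k:m)\leq\indm(G)+1$ for minimal generators $m$ of $I(G)^{k-1}$ --- is not an alternative: it \emph{is} the proof in \cite{BHT}, and the step you wave at (``exploiting that in a forest the connecting paths are unique'') is its hardest part. One must show that for a forest the colon ideal $I(G)^k:m$ is again the edge ideal of a graph obtained from $G$ by adding even-connection edges, that this larger graph (which need not be a forest, so the $k=1$ case does not apply to it verbatim) has induced matching number at most $\indm(G)$, and that its regularity is nevertheless bounded by its induced matching number plus one; this structural analysis is the mathematical heart of the theorem and is entirely absent from the proposal. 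A smaller structural issue: you announce induction on $k$ alone, but your granted reduction applies the inductive hypothesis to $I(G-v_1)^k$ --- same power, smaller forest --- so the argument would in any case have to be a double induction on $(|V(G)|,k)$. That is easily repaired; the missing reduction is not.
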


The authors of \cite{BHZ} proved the surprising result that the highest non-vanishing squarefree power of an edge ideal has linear resolution. This result will be crucial in the proof of Theorem~\ref{thm: upper bound for forest}.
\begin{theorem} \label{thm: highest power has linear quotients}
	{\em(}\cite[Theorem~5.1]{BHZ}{\em)}
	Let $G$ be a graph with matching number at least one. Then $I(G)^{[\mat(G)]}$ has linear quotients and thus it has linear resolution.
\end{theorem}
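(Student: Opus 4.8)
The plan is to establish the stronger statement that $I(G)^{[m]}$, with $m=\mat(G)$, has \emph{linear quotients}, since a linear resolution follows automatically. First I would reframe the generators combinatorially. For a matching $M=\{e_1,\dots ,e_m\}$ the generator $e_1\cdots e_m$ equals the squarefree monomial $u_M=\prod_{v\in V(M)}v$ supported on the covered vertex set $V(M)=\bigcup_i e_i$; thus it depends only on $V(M)$, not on $M$ itself. Hence the minimal generators of $I(G)^{[m]}$ are in bijection with the subsets $V\subseteq V(G)$ of size $2m$ that are \emph{matchable}, meaning $V=V(M)$ for some maximum matching $M$. Since all these monomials have the same degree $2m$, no one divides another, so they are exactly the minimal generators. (These matchable sets are precisely the bases of the matching matroid of $G$, which already predicts linear quotients; but I would give a self-contained argument.)

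The key combinatorial input is an exchange property: if $V$ and $W$ are vertex sets of maximum matchings and $a\in V\setminus W$, then there exists $b\in W\setminus V$ such that $V-a+b$ is again matchable. To prove it, choose maximum matchings $M,N$ with $V(M)=V$ and $V(N)=W$, and examine $M\triangle N$, which is a disjoint union of alternating paths and even cycles. As $a$ is covered by $M$ but not by $N$, it is an endpoint of a path $P$ in $M\triangle N$; following $P$ it must terminate at a vertex $b$ covered by $N$ but not by $M$, since the only alternative would be an $N$-augmenting path, contradicting the maximality of $N$. Thus $b\in W\setminus V$, and flipping $M$ along $P$ gives a matching $M'=M\triangle P$ of the same size with $V(M')=V-a+b$, which proves the claim.

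With the exchange property in hand, I would fix a labelling $x_1<\cdots <x_n$ and order the minimal generators so that $u_V\succ u_W$ exactly when $\max(V\triangle W)\in V$. To verify the linear quotients criterion, take any $u_W\prec u_V$ and set $c=\max(V\triangle W)$; by the definition of the order, $c\in V\setminus W$. Applying the exchange property with $a=c$ produces some $b\in W\setminus V$, which necessarily satisfies $b<c$, with $V':=V-c+b$ matchable. Then $u_{V'}\colon u_V$ is the single variable $b$, this $b$ divides $u_W\colon u_V=\prod_{w\in W\setminus V}w$, and $\max(V'\triangle V)=c\in V$ shows $u_{V'}\prec u_V$. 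This is exactly the condition guaranteeing linear quotients, from which the linear resolution follows.

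The hard part is the exchange lemma together with its compatibility with the chosen order: one must arrange that the exchange partner $b$ always lies in $W\setminus V$, hence strictly below $c=\max(V\triangle W)$, so that the swap both preserves matchability and strictly decreases the order. The alternating-path argument is what forces $b\in W\setminus V$, and once that is secured the order-theoretic bookkeeping is routine. A secondary point to treat carefully is the structure of the components of $M\triangle N$ (isolated edges and even cycles), but these consist only of vertices common to $V$ and $W$ and therefore do not interfere with the exchange.
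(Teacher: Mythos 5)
This theorem is not proved in the paper at all: it is quoted from \cite[Theorem~5.1]{BHZ}, so there is no internal argument to compare yours against; your proposal must be judged as a self-contained proof, and as such it is correct. You identify the minimal generators of $I(G)^{[\mat(G)]}$ with the vertex sets covered by maximum matchings (valid, since all generators have the same degree $2\mat(G)$), prove the exchange property for such sets via the alternating-path argument on $M\triangle N$ (the case of the path ending in an $M$-edge is correctly excluded because it would be an $N$-augmenting path, contradicting maximality of $N$), and then verify the standard linear-quotients criterion: for $u_W\prec u_V$ with $c=\max(V\triangle W)\in V\setminus W$, the exchange partner $b\in W\setminus V$ satisfies $b<c$, the set $V'=V-c+b$ is matchable, $u_{V'}\prec u_V$, $u_{V'}:u_V=(b)$, and $b$ divides $u_W:u_V$ --- exactly what is needed for each colon ideal $(u_1,\dots,u_{j-1}):u_j$ to be generated by variables. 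Conceptually, your argument re-proves the Edmonds--Fulkerson fact that the vertex sets of maximum matchings are the bases of the matching matroid, so that $I(G)^{[\mat(G)]}$ is a matroidal ideal, and then re-derives the classical fact (Herzog--Takayama, Conca--Herzog) that matroidal ideals have linear quotients; this exchange mechanism is also the essential content of the original proof in \cite{BHZ}, so your route is a legitimate, somewhat more explicitly combinatorial reconstruction of it rather than a genuinely new one. One small inaccuracy, harmless because it occurs only in your closing remark and is never used: a component of $M\triangle N$ consisting of a single edge would have both endpoints \emph{not} common to $V$ and $W$; the correct statement is that such components cannot occur when both matchings are maximum, since a lone $M$-edge (resp.\ $N$-edge) component is an $N$-augmenting (resp.\ $M$-augmenting) path. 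Your main argument only uses that $a$, having degree one in $M\triangle N$, is the endpoint of a path component, so this slip does not affect the proof.
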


\begin{lemma}[Restriction Lemma]
	{\em (\cite[Lemma 4.4]{HHZ})}
	\label{restrict}
	Let $I\subset S$ be a monomial ideal, and let $\FF$ be its minimal multigraded free $S$-resolution. Let $G(I)$ denote the minimal set of monomial generators of $I$. Furthermore, let $m$ be a monomial. We set
	\[
	I^{\leq m}=(u\in   G(I)\:\;  u|m ).
	\]
	Let $F_i=\Dirsum_{j}S(-\ab_{ij})$ be the $i$th free module in $\FF$. Then $\FF^{\leq m}$ with $$F_i^{\leq m}= \Dirsum_{j,\;  \xb^{\ab_{ij}}|m}S(-\ab_{ij})$$ is a subcomplex of $\FF$ and the minimal multigraded free resolution of $I^{\leq m}$.
\end{lemma}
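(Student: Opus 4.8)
The plan is to work entirely within the $\ZZ^n$-multigrading and reduce the claim to a strand-by-strand comparison of complexes of $\Bbbk$-vector spaces. Three things must be checked: that $\FF^{\leq m}$ is a subcomplex of $\FF$, that it is a free resolution of $I^{\leq m}$, and that it is minimal. The first and third are formal, and essentially the entire content lies in the middle step, namely exactness together with the correct identification of the zeroth homology.

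For the subcomplex claim, I would use that each differential $\partial_i\colon F_i\to F_{i-1}$ is multigraded of degree $0$, so its $(j,l)$-component $S(-\ab_{ij})\to S(-\ab_{i-1,l})$ is multiplication by a scalar multiple of $\xb^{\ab_{ij}-\ab_{i-1,l}}$, which is nonzero only when $\xb^{\ab_{i-1,l}}\mid \xb^{\ab_{ij}}$. Hence if $\xb^{\ab_{ij}}\mid m$ and this component is nonzero, then $\xb^{\ab_{i-1,l}}\mid \xb^{\ab_{ij}}\mid m$, so the target summand also survives the restriction. Thus $\partial_i(F_i^{\leq m})\subseteq F_{i-1}^{\leq m}$, and since each $F_i^{\leq m}$ is visibly free, $\FF^{\leq m}$ is a free subcomplex.

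The heart of the argument is a strand comparison. For a monomial $w=\xb^{\cb}$, write $(\FF^{\leq m})_w$ for the multidegree-$w$ strand, a complex of finite-dimensional $\Bbbk$-spaces whose degree-$i$ piece has basis $\{\xb^{\cb-\ab_{ij}}e_{ij}:\ \xb^{\ab_{ij}}\mid m \text{ and } \xb^{\ab_{ij}}\mid w\}$. The key point is that the two divisibility conditions together say exactly $\xb^{\ab_{ij}}\mid w'$, where $w':=\gcd(w,m)$. Writing $w'=\xb^{\cb'}$, the assignment $\xb^{\cb-\ab_{ij}}e_{ij}\mapsto \xb^{\cb'-\ab_{ij}}e_{ij}$ is a basis bijection, and using the explicit form of the differential computed above one checks it commutes with $\partial$, giving an isomorphism of complexes $(\FF^{\leq m})_w\cong(\FF^{\leq m})_{w'}$. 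Since $w'\mid m$, every summand with $\xb^{\ab_{ij}}\mid w'$ automatically satisfies $\xb^{\ab_{ij}}\mid m$, so on the strand $w'$ the restricted and full complexes coincide: $(\FF^{\leq m})_{w'}=(\FF)_{w'}$. As $\FF$ resolves $I$, the strand $(\FF)_{w'}$ has $H_i=0$ for $i>0$ and $H_0=I_{w'}$. Combining these gives $H_i((\FF^{\leq m})_w)=0$ for all $i>0$ and $H_0((\FF^{\leq m})_w)\cong I_{w'}$, for every $w$.

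It then remains to identify $H_0(\FF^{\leq m})$ with $I^{\leq m}$ on the nose. Restricting the augmentation $F_0\to I$ yields a multigraded surjection $F_0^{\leq m}\to I^{\leq m}$ that annihilates $\operatorname{im}(\partial_1^{\leq m})$, hence a multigraded surjection $\pi\colon H_0(\FF^{\leq m})\to I^{\leq m}$. Since $w\in I^{\leq m}$ holds precisely when $\gcd(w,m)\in I$, we get $\dim_{\Bbbk}(I^{\leq m})_w=\dim_{\Bbbk}I_{w'}=\dim_{\Bbbk}H_0(\FF^{\leq m})_w$ in every multidegree, so $\pi$ is an isomorphism on each graded piece and therefore an isomorphism. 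Minimality is then immediate, since the matrices of $\FF^{\leq m}$ are submatrices of those of $\FF$ and so have no unit entries. I expect the main obstacle to be the bookkeeping in the strand isomorphism $(\FF^{\leq m})_w\cong(\FF^{\leq m})_{w'}$, especially verifying compatibility with the differential, together with the final upgrade from a strandwise dimension count to a genuine isomorphism $H_0(\FF^{\leq m})\cong I^{\leq m}$ via the natural augmentation rather than merely equating ranks.
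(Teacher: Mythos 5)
Your proof is correct. Note that the paper offers no proof of this lemma at all: it is quoted verbatim from \cite[Lemma~4.4]{HHZ}, so there is no internal argument to compare against, and your write-up supplies exactly what the paper delegates to a citation. Each step checks out: the subcomplex claim follows, as you say, from multihomogeneity of the differential; the crux is the reduction of the strand $(\FF^{\leq m})_w$ to $(\FF^{\leq m})_{w'}=\FF_{w'}$ with $w'=\gcd(w,m)$, and your basis bijection does commute with $\partial$ precisely because any summand receiving a nonzero component from a surviving summand itself survives; and the identification of $H_0(\FF^{\leq m})$ with $I^{\leq m}$ via the restricted augmentation, combined with the strandwise count $\dim_\Bbbk (I^{\leq m})_w=\dim_\Bbbk I_{w'}$ (both are $0$ or $1$, since $w\in I^{\leq m}$ iff $w'\in I$), correctly upgrades exactness in positive homological degrees to exactness of the augmented complex --- this last point implicitly uses minimality of $\FF$, so that the basis of $F_0$ corresponds to $G(I)$, which you have. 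Minimality of $\FF^{\leq m}$ from the submatrix observation is immediate. Your argument is essentially the standard one of Herzog--Hibi--Zheng.
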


We will use the following consequence of Lemma~\ref{restrict}:

\begin{corollary}\cite[Corollary~1.3]{EHHM}
	\label{cor:restriction induced subgraph}
	Let $H$ be an induced subgraph of $G$. Then $b_{i,\ab}(I(H)^{[k]})\leq b_{i,\ab}(I(G)^{[k]})$ for all $i$ and $\ab \in \ZZ^n$. In particular,   $\reg(I(H)^{[k]})\leq  \reg(I(G)^{[k]})$.
\end{corollary}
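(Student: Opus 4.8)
The plan is to realize $I(H)^{[k]}$ as a \emph{restriction} of $I(G)^{[k]}$ in the sense of Lemma~\ref{restrict}, for a cleverly chosen squarefree monomial $m$, and then to compare the two minimal multigraded resolutions summand by summand.

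Concretely, let $U=V(H)\subseteq V(G)$ and set $m=\prod_{x_i\in U}x_i$, the squarefree monomial supported on the vertices of $H$. Both ideals are considered inside the common ring $S=\Bbbk[x_1,\dots,x_n]$, so their multidegrees live in the same $\ZZ^n$ and the comparison is literal. Every minimal generator of $I(G)^{[k]}$ has the form $u=e_1\cdots e_k$ for a $k$-matching $\{e_1,\dots,e_k\}$ of $G$; since $u$ is squarefree, $u\mid m$ holds precisely when every vertex met by this matching lies in $U$. First I would observe that, because $H$ is an \emph{induced} subgraph, a set of pairwise disjoint edges of $G$ all of whose endpoints lie in $U$ is the same data as a $k$-matching of $H$: the induced hypothesis guarantees that an edge of $G$ joining two vertices of $U$ is already an edge of $H$. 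Hence the generators of $I(G)^{[k]}$ dividing $m$ are exactly the generators of $I(H)^{[k]}$, giving the bookkeeping identity $(I(G)^{[k]})^{\leq m}=I(H)^{[k]}$ as ideals of $S$.

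With this identification, I would apply Lemma~\ref{restrict} with $I=I(G)^{[k]}$ and this $m$. The lemma asserts that the minimal multigraded free resolution $\FF^{\leq m}$ of $I^{\leq m}=I(H)^{[k]}$ embeds in the minimal multigraded free resolution $\FF$ of $I(G)^{[k]}$ as the subcomplex consisting of exactly those summands $S(-\ab_{ij})$ with $\xb^{\ab_{ij}}\mid m$. Reading off Betti numbers, for each homological degree $i$ and each multidegree $\ab\in\ZZ^n$ the quantity $b_{i,\ab}(I(H)^{[k]})$ counts a subset of the summands counted by $b_{i,\ab}(I(G)^{[k]})$: it equals $b_{i,\ab}(I(G)^{[k]})$ when $\xb^{\ab}\mid m$ and is $0$ otherwise. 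In either case $b_{i,\ab}(I(H)^{[k]})\leq b_{i,\ab}(I(G)^{[k]})$, which is the first assertion.

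The regularity bound is then immediate. Since $\reg(I)=\max\{\,j-i:b_{i,j}(I)\neq 0\,\}$ and each graded Betti number $b_{i,j}=\sum_{|\ab|=j}b_{i,\ab}$ is a nonnegative sum of multigraded ones, the multigraded inequalities force $b_{i,j}(I(H)^{[k]})\leq b_{i,j}(I(G)^{[k]})$ for all $i,j$, and hence $\reg(I(H)^{[k]})\leq\reg(I(G)^{[k]})$. The only genuine content of the argument is the identity $(I(G)^{[k]})^{\leq m}=I(H)^{[k]}$, and I expect the sole delicate point to be its reliance on the induced hypothesis: for a non-induced subgraph, a matching of $G$ supported on $U$ need not be a matching of $H$, and the identification—together with the conclusion—would break down.
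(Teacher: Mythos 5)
Your proof is correct and follows exactly the route the paper intends: the corollary is presented as a consequence of the Restriction Lemma (Lemma~\ref{restrict}), applied with $m=\prod_{x_i\in V(H)}x_i$, and the key identification $(I(G)^{[k]})^{\leq m}=I(H)^{[k]}$ — which is where the induced-subgraph hypothesis enters — is precisely the argument of \cite[Corollary~1.3]{EHHM}. Nothing is missing; the passage from multigraded to graded Betti numbers and then to regularity is handled correctly.
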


The following result is well-known, see for example Lemma~3.1 in the survey article \cite{H}.
\begin{lemma}\label{lem: key lemma}
	For any homogeneous ideal $I\subset S$ and any homogeneous element $m\in S$ of degree $d$ the short exact sequence
	\[0 \rightarrow \frac{S}{I:m}(-d) \rightarrow \frac{S}{I} \rightarrow \frac{S}{I+(m)} \rightarrow 0\]
	yields the following regularity bound for $I$:
	\[\reg(I)\leq \max\{\reg(I:m)+d, \reg(I+(m)) \}.\]
\end{lemma}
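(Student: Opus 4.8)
The plan is to prove this at the level of graded Betti numbers, matching the definition $\reg(\,\cdot\,)=\max\{j-i:b_{i,j}(\,\cdot\,)\neq 0\}$ recorded above, where $b_{i,j}(M)=\dim_\Bbbk \Tor_i^S(M,\Bbbk)_j$. Since the displayed sequence involves the quotient modules $S/I$, $S/(I:m)$ and $S/(I+(m))$ rather than the ideals themselves, I would first record the standard translation $\reg(S/J)=\reg(J)-1$ for a nonzero proper homogeneous ideal $J$: appending $S$ in homological degree $0$ to a minimal graded free resolution of $J$ yields one of $S/J$, so $b_{0,0}(S/J)=1$ and $b_{i+1,j}(S/J)=b_{i,j}(J)$ for $i\geq 0$, whence $\reg(S/J)=\max\{0,\reg(J)-1\}=\reg(J)-1$. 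Before that I would check the sequence is genuinely exact: multiplication by $m$ is a homogeneous map of degree $d$, which accounts for the twist $(-d)$; it sends $\bar f\mapsto \overline{fm}$, its kernel is $\{\bar f : fm\in I\}=0$ in $S/(I:m)$, its image is $((m)+I)/I$, and the corresponding cokernel is exactly $S/(I+(m))$.

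Next I would tensor the short exact sequence with the residue field $\Bbbk$ and pass to the associated long exact sequence of $\Tor$-modules. In each internal degree $j$ and homological degree $i$ this reads
\[
\cdots \to \Tor_i^S\!\Big(\tfrac{S}{I:m}(-d),\Bbbk\Big)_j \to \Tor_i^S\!\Big(\tfrac{S}{I},\Bbbk\Big)_j \to \Tor_i^S\!\Big(\tfrac{S}{I+(m)},\Bbbk\Big)_j \to \cdots,
\]
so that exactness at the middle term forces: whenever $b_{i,j}(S/I)\neq 0$, at least one of the two flanking terms is nonzero. Using the shift identity $b_{i,j}(M(-d))=b_{i,j-d}(M)$, a nonzero left term means $b_{i,j-d}(S/(I:m))\neq 0$, contributing $j-i=(j-d)-i+d\leq \reg(S/(I:m))+d$, while a nonzero right term gives $j-i\leq \reg(S/(I+(m)))$. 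Taking the maximum over all pairs $(i,j)$ with $b_{i,j}(S/I)\neq 0$ then yields
\[
\reg\!\Big(\tfrac{S}{I}\Big)\leq \max\Big\{\reg\!\Big(\tfrac{S}{I:m}\Big)+d,\ \reg\!\Big(\tfrac{S}{I+(m)}\Big)\Big\}.
\]

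Finally I would substitute the translation $\reg(S/J)=\reg(J)-1$ into each of the three terms and cancel the common $-1$ to recover the asserted bound $\reg(I)\leq\max\{\reg(I:m)+d,\reg(I+(m))\}$. The degenerate cases are easy to dispose of separately: if $m\in I$ then $I:m=S$ and $I+(m)=I$, the sequence collapses, and the inequality is immediate; and since $m$ is homogeneous of positive degree, $I+(m)$ is always proper. I do not expect a serious obstacle here. The only points that require care are the bookkeeping of the degree twist $(-d)$—keeping the shift on the correct side so that the $(I:m)$ term is penalized by $+d$ while the $(I+(m))$ term is not—together with the passage between the regularity of an ideal and that of its quotient, which is precisely where the three regularities are placed on a common footing.
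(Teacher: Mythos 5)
Your proposal is correct and follows exactly the route the paper intends: the paper gives no proof of its own, citing the result as well-known (Lemma~3.1 of the survey \cite{H}), and the standard argument there is precisely yours --- the long exact sequence of $\Tor$ arising from the displayed short exact sequence, the shift identity $b_{i,j}(M(-d))=b_{i,j-d}(M)$, and the translation $\reg(S/J)=\reg(J)-1$. Your handling of the degenerate case $m\in I$ and of the twist bookkeeping is also sound, so there is nothing to add.
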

%More generally, the following lemma is well-known.
%\begin{lemma}\cite[Lemma~3.1]{HT}\label{lem:bound by short exact sequence}
%	If $0 \rightarrow M \rightarrow N \rightarrow P \rightarrow 0$ is a short exact sequence of graded $S$-modules, then
%	\begin{itemize}
%		\item [(i)] $\reg(N) \leq \max\{\reg(M), \reg(P)\}$. The equality holds if $\reg(P)=\reg(M)-1$. 
%		\item [(ii)] $\reg(M) \leq \max\{\reg(N), \reg(P)+1\}$. The equality holds if $\reg(N)=\reg(P)$.
%		\item[(iii)] $\reg(P)\leq \max\{\reg(M)-1, \reg(N)\}$. The equality holds if $\reg(M)=\reg(N)$.
%	\end{itemize}
%\end{lemma}

%For any graph $G$, the following lower bound was proved in \cite{EHHM}.
%\begin{theorem}\label{thm:lower bound by induced matching number}
%	For any $1\leq k\leq \indm(G)$, $\reg(I(G)^{[k]})\geq \indm(G)+k$.
%\end{theorem}

%---------------------------------------------
%   k-admissable matchings
%----------------------------------------------
\section{$k$-admissable matchings}

In this section, we define $k$-admissable matching of a graph, and we make some observations about their properties.
\begin{definition}
	For any positive integers $k$ and $n$, we call a sequence $(a_1,\dots ,a_n)$ of integers a \textit{$k$-admissable sequence} if the following conditions are satisfied:
	\begin{enumerate}
		\item $a_i\geq 1$ for each $i=1,\dots ,n$
		\item $a_1+\dots +a_n\leq n+k-1$.
	\end{enumerate}
	
\end{definition}

%-----------------------------------------
%  Old definition
%-----------------------------------------
%\begin{definition}\label{def: k-admissable matching}
%	For any positive integer $k$, we define \textit{$k$-admissable sequence} recursively as follows.
%	\begin{enumerate}
%		\item A sequence $(a_1,\dots ,a_n)$ where $n\geq 1$ is called $1$-admissable if $a_i=1$ for all $i\in [n]$.
%		\item For $k\geq 2$ and $n\geq 1$ we say $(a_1,\dots ,a_n)$ is a $k$-admissable sequence if either $(a_1,\dots ,a_n)$ is $(k-1)$-admissable or $(a_1,\dots ,a_i-1,\dots ,a_n)$ is $(k-1)$-admissable for some $i\in[n]$.
%	\end{enumerate}
%\end{definition}

\begin{definition}\label{def: k-admissable matching}
	Let $G$ be a graph with matching number $\mat(G)$. Let $M$ be a matching of $G$. For any $1\leq k \leq \mat(G)$ we say $M$ is $k$-\textit{admissable matching} if there exists a sequence $M_1,\dots ,M_r$ of non-empty subsets of $M$ such that  
	\begin{enumerate}
		\item $M=M_1\cup  \dots \cup M_r$,
		\item  $M_i\cap M_j=\emptyset$ for all $i\neq j$,
		\item for all $i\neq j$, if $e_i\in M_i$ and $e_j\in M_j$, then $\{e_i,e_j\}$ is a gap in $G$, 
		\item the sequence $(|M_1|, \dots ,|M_r|)$ is $k$-admissable, and
		\item the induced subgraph of $G$ on $\cup_{e\in M_i}e$ is a forest for all $i\in [r]$.
	\end{enumerate}
In such case, we say $M=M_1\cup \dots \cup M_r$ is a $k$\textit{-admissable partition} of $M$ for $G$.
\end{definition}

\begin{definition}
	The $k$-\textit{admissable matching number} of a graph $G$, denoted by $\aim(G,k)$, is defined by \[\aim(G,k)=\max\{|M| : M \text{ is a } k\text{-admissable matching of } G\}\]
	for $1\leq k \leq \mat(G)$. We define $\aim(G,k)=0$ if $G$ has no $k$-admissable matching.
\end{definition}

\begin{remark}\label{rk:properties of admissable matchings}
	For any graph $G$, one can deduce the following properties of $k$-admissable matchings from the definition.
	\begin{enumerate}
		\item A matching $M$ of $G$ is $1$-admissable if and only if $M$ is an induced matching of $G$. In particular, $\indm(G)=\aim(G,1)$.
		
		\item Let $G$ be a forest. If $M$ is a non-empty matching of $G$, then $M$ is an $|M|$-admissable matching of $G$. Therefore,  $\aim(G,k)\geq k$ for every $1\leq k \leq \mat(G)$.
		
		\item If $1\leq k <\mat(G)$ and $M$ is a $k$-admissable matching, then $M$ is $(k+1)$-admissable matching. In particular,
		\[\indm(G)=\aim(G,1)\leq \aim(G,2) \leq \dots \leq \aim(G,\mat(G))\leq \mat(G).\]
		Moreover, if $G$ is a forest, then $\aim(G,\mat(G))=\mat(G)$.
		
		\item \label{rk:properties item induced subgraph} If $H$ is an induced subgraph of $G$, then $\aim(H,k) \leq \aim(G,k)$ for all $k \in [\mat(H)]$.
		\end{enumerate}
\end{remark}

\begin{lemma}\label{lem:aim(G,k) cannot exceed}
	Let $2\leq k \leq \mat(G)$. If $M$ is $k$-admissable matching, then either $M$ is $(k-1)$-admissable matching, or there exists an edge $e\in M$ such that $M\setminus \{e\}$ is $(k-1)$-admissable matching. Therefore, $\aim(G, k) \leq \aim(G,k-1)+1$.
\end{lemma}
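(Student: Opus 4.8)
The plan is to work directly from Definition~\ref{def: k-admissable matching} and to track how condition (4) --- the numerical $k$-admissability of the sequence of block sizes --- changes when $k$ is lowered by one. The only difference between a $k$-admissable partition and a $(k-1)$-admissable partition lies in the bound on the sum of the block sizes: for a partition into $r$ nonempty blocks, the sequence $(|M_1|,\dots,|M_r|)$ is $k$-admissable exactly when $|M|=\sum_{i=1}^r|M_i|\leq r+k-1$, whereas $(k-1)$-admissability of the same sequence requires $|M|\leq r+k-2$. Conditions (1), (2), (3), and (5) make no reference to $k$ at all, so they transfer unchanged between the two notions. This reduces the whole statement to a bookkeeping argument about one arithmetic inequality.

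First I would fix a $k$-admissable partition $M=M_1\cup\dots\cup M_r$ and split into two cases according to the size of $M$. If $|M|\leq r+k-2$, then this very partition already witnesses $(k-1)$-admissability of $M$, since the block-size sequence now satisfies the stronger sum bound while all the non-numerical conditions are untouched; this yields the first alternative of the statement. The remaining case is $|M|=r+k-1$. Here the key observation is that, because $k\geq 2$, we have $|M|=r+k-1>r$, so the blocks cannot all be singletons, and hence at least one block, say $M_i$, satisfies $|M_i|\geq 2$.

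I would then pick any edge $e\in M_i$ and pass to the partition $M_1,\dots,M_{i-1},\,M_i\setminus\{e\},\,M_{i+1},\dots,M_r$ of $M\setminus\{e\}$. Because $|M_i|\geq 2$, the block $M_i\setminus\{e\}$ is still nonempty, so this is again a partition into exactly $r$ nonempty blocks. The gap condition (3) persists since we only deleted an edge, and the forest condition (5) persists because an induced subgraph of a forest is a forest. The new block-size sum is $|M|-1=r+(k-1)-1$, which is precisely the $(k-1)$-admissability bound, so $M\setminus\{e\}$ is $(k-1)$-admissable. This gives the second alternative.

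Finally, for the inequality $\aim(G,k)\leq\aim(G,k-1)+1$, I would apply this dichotomy to a $k$-admissable matching $M$ of maximum size $|M|=\aim(G,k)$: in the first case $M$ itself is $(k-1)$-admissable, so $\aim(G,k)=|M|\leq\aim(G,k-1)$; in the second case $M\setminus\{e\}$ is $(k-1)$-admissable, so $|M|-1\leq\aim(G,k-1)$. Either way $\aim(G,k)\leq\aim(G,k-1)+1$. I expect no serious obstacle, since the argument is essentially an accounting of the constraint on block sizes; the one point deserving a moment's care is the verification that deleting an edge from a block of size at least two keeps the number of blocks equal to $r$, which is exactly what the hypothesis $k\geq 2$ secures by forcing some block to have size at least two.
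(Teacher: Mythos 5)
Your proposal is correct and follows essentially the same argument as the paper: the same dichotomy on whether the block-size sum is at most $r+k-2$ or exactly $r+k-1$, and in the latter case the same removal of an edge from a block of size at least two (guaranteed by $k\geq 2$) to obtain a $(k-1)$-admissable partition with the same number of blocks. Your additional checks that conditions (3) and (5) persist, and the explicit derivation of the inequality $\aim(G,k)\leq\aim(G,k-1)+1$, are details the paper leaves implicit but are exactly in its spirit.
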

\begin{proof}
	Let $M=M_1\cup \dots \cup M_r$ be a $k$-admissable partition of $M$ for $G$. Then $|M_1|+\dots +|M_r|\leq r+k-1$. If $|M_1|+\dots +|M_r|\leq r+k-2$, then $M$ is a $(k-1)$-admissable matching. Otherwise, $|M_1|+\dots +|M_r|=r+k-1$. Since $k\geq 2$, we must have $|M_i|\geq 2$ for some $i\in[r]$. Let $e\in M_i$. Then $M\setminus \{e\}=M_1\cup \dots \cup M_i\setminus\{e\}\cup \dots \cup M_r$ is a $(k-1)$-admissable partition of $M\setminus \{e\}$ for $G$.
\end{proof}
\begin{example}
	Let $G$ be the graph in Figure~\ref{admissable ex}. Since $G$ has $13$ vertices, $\mat(G)\leq 6$. Let $M_1=\{\{a,b\}, \{c,d\}\}$, $M_2=\{\{f,g\}, \{h,i\}\}$ and $M_3=\{\{j,k\}, \{l,m\}\}$. Then $M=M_1\cup M_2\cup M_3$ is a matching of size $6$ and thus $\mat(G)=6$. In fact, one can show that  $M=M_1\cup M_2\cup M_3$ is a $4$-admissable partition of $M$ for $G$. From Remark~\ref{rk:properties of admissable matchings} it follows that $\aim(G,4)=\aim(G,5)=\aim(G,6)=\mat(G)=6$.
	
	It is not hard to see that the induced matching number of $G$ is $3$. Therefore $\aim(G,1)=3$.
	
	Let $N_1=\{\{a,b\}, \{c,d\}\}$, $N_2=\{\{f,g\}\}$ and $N_3=\{\{j,k\}\}$. Then $N=N_1\cup N_2\cup N_3$ is a $2$-admissable partition of $N$ for $G$. Therefore, $\aim(G,2)\geq 4$. On the other hand, by Lemma~\ref{lem:aim(G,k) cannot exceed} we know that $\aim(G,2)\leq \aim(G,1)+1$. Hence $\aim(G,2)=4$.
	
	Similarly, $U=M_1\cup M_2\cup N_3$ is a $3$-admissable partition of $U$ for $G$. Therefore, $\aim(G,3)\geq 5$. On the other hand, by Lemma~\ref{lem:aim(G,k) cannot exceed} we know that $\aim(G,3)\leq \aim(G,2)+1$. Hence $\aim(G,3)=5$. 
\end{example}

\begin{remark}\label{lem: adding 1 at the end}
	If the sequence $(a_1,\dots ,a_n)$ is $k$-admissable, then so is $(a_1,\dots ,a_n, 1)$.
\end{remark}

\begin{lemma}\label{lem:every nonempty subset is k-admissable}
	Let $M$ be a $k$-admissable matching of a graph $G$. Then any non-empty subset of $M$ is also a $k$-admissable matching of $G$.
\end{lemma}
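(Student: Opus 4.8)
The plan is to inherit a valid partition for any non-empty subset $N \subseteq M$ directly from a given $k$-admissable partition of $M$. Fix a $k$-admissable partition $M = M_1 \cup \dots \cup M_r$ witnessing that $M$ is $k$-admissable, and set $N_i = N \cap M_i$ for each $i \in [r]$. Since the $M_i$ are pairwise disjoint and cover $M$, the $N_i$ are pairwise disjoint and cover $N$. Discarding the indices $i$ with $N_i = \emptyset$ (at least one $N_i$ is non-empty because $N \neq \emptyset$) leaves a collection of non-empty sets, which I claim is a $k$-admissable partition of $N$ for $G$.

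Conditions (1), (2), (3) and (5) of Definition~\ref{def: k-admissable matching} are immediate from those for $M$. Indeed, (1) and (2) hold by construction; for (3), any two edges lying in distinct surviving parts $N_i, N_j$ already lie in distinct parts $M_i, M_j$, hence form a gap in $G$; and for (5), the induced subgraph of $G$ on $\bigcup_{e \in N_i} e$ is an induced subgraph of the induced subgraph on $\bigcup_{e \in M_i} e$, which is a forest, and an induced subgraph of a forest is again a forest.

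The only point requiring care is condition (4), the $k$-admissability of the sequence of surviving part-sizes. Here the useful reformulation is that a sequence $(a_1, \dots, a_n)$ of positive integers is $k$-admissable precisely when its total excess $\sum_{i} (a_i - 1)$ is at most $k - 1$. Each surviving part satisfies $1 \leq |N_i| \leq |M_i|$, so its excess $|N_i| - 1$ is non-negative and bounded above by $|M_i| - 1$. Summing over the surviving indices and then enlarging the sum to run over all of $[r]$ (which only adds the non-negative excesses of the discarded parts) gives
\[
\sum_{i : N_i \neq \emptyset} (|N_i| - 1) \;\leq\; \sum_{i : N_i \neq \emptyset} (|M_i| - 1) \;\leq\; \sum_{i=1}^{r} (|M_i| - 1) \;\leq\; k - 1,
\]
where the last inequality is exactly the $k$-admissability of $(|M_1|, \dots, |M_r|)$. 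Thus the surviving part-sizes form a $k$-admissable sequence, completing the verification that $N$ is a $k$-admissable matching of $G$. I do not expect any genuine obstacle here: the content lies entirely in observing that restricting the partition to $N$ preserves every condition, with the excess reformulation of condition (4) being the one step that deserves an explicit check.
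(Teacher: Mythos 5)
Your proof is correct. It follows the same underlying idea as the paper's proof---inherit the given partition of $M$ by restricting it to the subset $N$ and discarding parts that become empty---but the execution is genuinely different. The paper argues by repeatedly removing a single edge: it reduces to the case $|N| = |M|-1$, splits into two cases according to whether the part containing the removed edge becomes empty, and in each case only has to verify the cardinality inequality $|N| \leq r' + k - 1$ for the new number of parts $r'$; conditions (1)--(3) and (5) are left implicit, and the general statement follows by iterating. You instead handle an arbitrary non-empty subset in one step, setting $N_i = N \cap M_i$, and you make the verification of condition (4) transparent by reformulating $k$-admissability of a positive integer sequence as the excess bound $\sum_i (a_i - 1) \leq k-1$, which is monotone under shrinking parts and discarding parts. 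Your route buys a cleaner, induction-free argument with all five conditions checked explicitly (including the observation that induced subgraphs of forests are forests, needed for condition (5)); the paper's route buys slightly less bookkeeping per step at the cost of an implicit induction and a case split. Both are complete proofs.
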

\begin{proof}
	Let $M=M_1\cup \dots \cup M_r$ be a $k$-admissable partition of $M$ for $G$. Then the sequence $(|M_1|,\dots ,|M_r|)$ is $k$-admissable. Therefore $|M|\leq r+k-1$. Let us assume that $|M|>1$ since otherwise $M$ is the only non-empty subset of itself. It suffices to show that for any $N\subseteq M$ with $|N|=|M|-1$, the matching $N$ is $k$-admissable. Without loss of generality, assume that $N=M\setminus \{e\}$ for some $e\in M_1$. If $M_1=\{e\}$, then $N=M_2\cup \dots \cup M_r$ is a $k$-admissable partition of $N$ for $G$ since $|N|=|M|-1\leq (r-1)+k-1$. Otherwise $N=M_1\setminus \{e\} \cup M_2 \cup \dots \cup M_r$ is a $k$-admissable partition of $N$ for $G$ since $|N|\leq r+k-1$.
\end{proof}
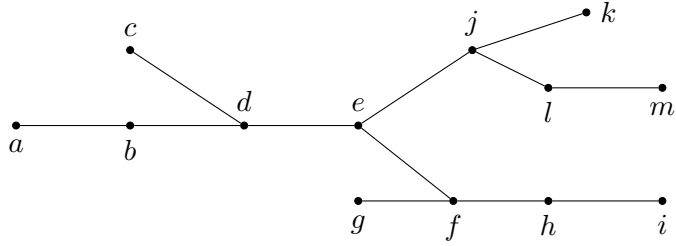
\begin{figure}[hbt!]
	\centering
	\begin{tikzpicture}
	[scale=0.500, vertices/.style={draw, fill=black, circle, minimum size = 4pt, inner sep=0.5pt}, another/.style={draw, fill=black, circle, minimum size = 2.6pt, inner sep=0.1pt}]
	\node[another, label=below:{$a$}] (a) at (-9,0) {};
	\node[another, label=below:{$b$}] (b) at (-6,0) {};
	\node[another, label=above:{$c$}] (c) at (-6, 2) {};
	\node[another, label=above:{$d$}] (d) at (-3,0) {};
	\node[another, label=above:{$e$}] (e) at (0,0) {};
	\node[another, label=below:{$f$}] (f) at (2.5,-2) {};
	\node[another, label=below:{$g$}] (g) at (0,-2) {};
	\node[another, label=below:{$h$}] (h) at (5,-2) {};
	\node[another, label=below:{$i$}] (i) at (8,-2) {};
	\node[another, label=above:{$j$}] (j) at (3,2) {};
	\node[another, label=right:{$k$}] (k) at (6,3) {};
	\node[another, label=below:{$l$}] (l) at (5,1) {};
	\node[another, label=below:{$m$}] (m) at (8,1) {};
	\foreach \to/\from in {a/b, b/d, c/d, d/e, e/f, f/g, f/h, h/i, e/j, j/k, j/l, l/m}
	\draw [-] (\to)--(\from);
	\end{tikzpicture}
	\caption{\label{admissable ex} A graph $G$ with $\aim(G,1)=3,\, \aim(G,2)=4,\, \aim(G,3)=5$ and $\aim(G,4)=\aim(G,5)=\aim(G,6)=\mat(G)=6$.}
\end{figure}

%------------------------------------------
%   POPWERS OF FORESTS
%--------------------------------------------

\section{Upper bounds for squarefree powers of edge ideals of forests}
In this section, we provide a sharp upper bound for $\reg(I(G)^{[k]})$ where $G$ is a forest, in terms of $k$-admissable matching number of $G$. A key idea of our method is to work with a special type of vertex in a forest, which we define below.
\begin{definition}
	Let $G$ be a forest with a leaf $x$ and its unique neighbor $y$. We say $x$ is a \textit{distant leaf} if $y$ has at most one neighbor whose degree is greater than $1$. In this case, we say $\{x,y\}$ is a \textit{distant edge}.
\end{definition}
\begin{lemma}\label{lem:distant leaf exists}
	Let $G$ be a forest with at least one edge. Then $G$ has a distant leaf.
\end{lemma}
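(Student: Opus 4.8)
The plan is to produce the distant leaf by a short case analysis on the maximum degree of $G$, invoking Lemma~\ref{lem:jacques} to handle the generic situation.

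First I would dispose of the degenerate case in which every vertex of $G$ has degree at most $1$. Since $G$ is a forest with at least one edge and no vertex has degree $\geq 2$, any connected component carrying an edge must consist of exactly two vertices joined by a single edge $\{x,y\}$ (a connected graph all of whose vertices have degree at most one is a single vertex or a single edge). For such an edge, $x$ is a leaf whose unique neighbor is $y$, and the only neighbor of $y$ is $x$, which has degree $1$. Hence $y$ has no neighbor of degree greater than $1$, so $x$ is a distant leaf and $\{x,y\}$ is a distant edge.

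The main case is when $G$ contains a vertex of degree at least two. Here I would apply Lemma~\ref{lem:jacques} directly to the forest $G$: it yields a vertex $v$ with neighbors $v_1,\dots,v_n$, where $n \geq 2$ and $v_1,\dots,v_{n-1}$ all have degree one. Setting $x = v_1$ and $y = v$, the vertex $x$ is a leaf with unique neighbor $y$. Among the neighbors $v_1,\dots,v_n$ of $y$, only $v_n$ can possibly have degree greater than $1$, since $v_1,\dots,v_{n-1}$ are leaves. Therefore $y$ has at most one neighbor of degree greater than $1$, and $x$ is a distant leaf with distant edge $\{x,y\}$. This settles both cases.

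I do not expect a genuine obstacle here: the content is entirely carried by Lemma~\ref{lem:jacques}, and the only point requiring care is the bookkeeping in the degenerate case, where the absence of cycles (the forest hypothesis) is what forces an edge-bearing component to be a single edge rather than a longer path or a cycle. The conclusion of Lemma~\ref{lem:jacques}, that all but one neighbor of $v$ is a leaf, matches exactly the defining condition of a distant leaf, so no further work is needed.
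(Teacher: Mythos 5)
Your proof is correct and follows essentially the same route as the paper: split on whether $G$ has a vertex of degree at least two, handle the degenerate case by noting every edge-bearing component is a $K_2$, and otherwise invoke Lemma~\ref{lem:jacques}, whose conclusion matches the definition of a distant leaf verbatim. The paper's proof is just a terser version of exactly this argument.
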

\begin{proof}
	If $G$ has no vertex of degree at least $2$, then $G$ consists of union of some isolated vertices and $K_2$'s. In such case, every edge is a distant edge. Otherwise, the result follows from Lemma~\ref{lem:jacques}.
\end{proof}

\begin{figure}[hbt!]
	\centering
	\begin{tikzpicture}
	[scale=0.800, vertices/.style={draw, fill=black, circle, minimum size = 4pt, inner sep=0.5pt}, another/.style={draw, fill=black, circle, minimum size = 2.5pt, inner sep=0.1pt}]
	\node[another, label=below:{$x_2$}] (a) at (-0.5,0) {};
	\node[another, label=below:{$x_1$}] (b) at (-3, 0) {};
	\node[another, label=above:{$x_3$}] (c) at (1, 1) {};
	\node[another, label=below:{$x_5$}] (d) at (1,-1) {};
	\node[another, label=above:{$x_4$}] (e) at (3,1) {};
	\node[another, label=below:{$x_6$}] (f) at (3,-1) {};
	\foreach \to/\from in {a/b, a/c, a/d, c/e, d/f}
	\draw [-] (\to)--(\from);
	\end{tikzpicture}
	\caption{\label{graph} A tree $G$ with distant leaves $x_4$ and $x_6$.}
\end{figure}
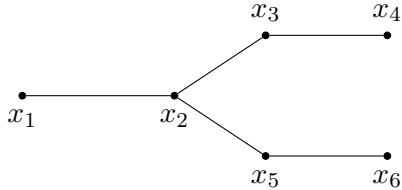

\begin{lemma}\label{lem: matching number leaf edge removal}
	Let $G$ be a forest with a leaf $x$. Then $\mat(G)=\mat(G-\{x,y\})+1$ where $y$ is the unique neighbor of $x$.
\end{lemma}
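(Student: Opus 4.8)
The plan is to prove the two inequalities $\mat(G)\ge \mat(G-\{x,y\})+1$ and $\mat(G)\le \mat(G-\{x,y\})+1$ separately. The first is immediate: if $M'$ is a maximum matching of $G-\{x,y\}$, then the edge $\{x,y\}$ is disjoint from every edge of $M'$, since neither $x$ nor $y$ is a vertex of $G-\{x,y\}$. Hence $M'\cup\{\{x,y\}\}$ is a matching of $G$, and so $\mat(G)\ge |M'|+1=\mat(G-\{x,y\})+1$.

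For the reverse inequality, the key step is to show that $G$ has a maximum matching containing the edge $\{x,y\}$. Starting from an arbitrary maximum matching $M$ of $G$, I would argue as follows. Since $x$ is a leaf whose only neighbor is $y$, the only edge that can cover $x$ is $\{x,y\}$. If $\{x,y\}\in M$ there is nothing to do. Otherwise $x$ is unmatched by $M$; if $y$ were also unmatched, then $M\cup\{\{x,y\}\}$ would be a strictly larger matching, contradicting the maximality of $M$, so $y$ must be covered by some edge $\{y,z\}\in M$. Replacing this edge by $\{x,y\}$ produces $M'=(M\setminus\{\{y,z\}\})\cup\{\{x,y\}\}$, which is again a matching (only the vertex $z$ is freed, and $x$ was previously uncovered), has the same cardinality as $M$, and contains $\{x,y\}$. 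Thus $M'$ is a maximum matching of $G$ using the leaf edge.

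Once such a maximum matching $M'$ containing $\{x,y\}$ is in hand, deleting this edge shows that $M'\setminus\{\{x,y\}\}$ is a matching of $G-\{x,y\}$ of size $|M'|-1=\mat(G)-1$, whence $\mat(G-\{x,y\})\ge \mat(G)-1$, which is exactly the desired inequality. Combining the two bounds yields the claimed equality. The main, and essentially the only, subtle point is the exchange argument establishing a maximum matching that uses the leaf edge; I note that this argument never invokes the forest hypothesis, so the statement in fact holds for any graph possessing a leaf, the forest assumption being inherited only from the ambient setting of the paper.
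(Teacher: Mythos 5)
Your proof is correct and takes essentially the same approach as the paper: the paper's proof removes from a maximum matching the (at most one) edge meeting $y$ to obtain a matching of $G-\{x,y\}$ of size at least $\mat(G)-1$, while your exchange step---swapping that edge for $\{x,y\}$ and then deleting $\{x,y\}$---is the same idea with one extra move. Your closing observation that the forest hypothesis is never used is accurate and applies equally to the paper's argument.
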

\begin{proof}
	Any matching of $G-\{x,y\}$ can be extended to a matching of $G$ by adding the edge $\{x,y\}$. Therefore, $\mat(G)\geq \mat(G-\{x,y\})+1$. On the other hand, let $M=\{e_1,\dots , e_{\mat(G)}\}$ be a matching of $G$ of maximum size. If no edge of $M$ contains the vertex $y$, then $M$ is also a matching of $G-\{x,y\}$ and we get $\mat(G)\leq \mat(G-\{x,y\})$ as desired. Otherwise, since $M$ is a matching, there is only one edge $e_i \in M$ such that $y\in e_i$. In such case, $M\setminus \{e_i\}$ is a matching of $G-\{x,y\}$ and $\mat(G)-1\leq \mat(G-\{x,y\})$.
\end{proof}
\begin{lemma}\label{lem:colon by leaf edge}
	Let $G$ be a graph with a leaf $x$. If $y$ is the neighbor of $x$, then for all $k\geq 2$, \[I(G)^{[k]}:(xy)=I(G-\{x,y\})^{[k-1]}.\] %for every $2\leq k \leq \mat(G)$.
\end{lemma}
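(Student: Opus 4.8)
The plan is to prove the two inclusions separately, exploiting the fact that both sides are monomial ideals and that $x$, being a leaf, forces $\{x,y\}$ to be the \emph{only} edge of $G$ containing $x$. For a monomial ideal $I=(u_1,\dots ,u_m)$ and a monomial $w$, the colon $I:w$ is generated by the monomials $u_i/\gcd(u_i,w)$, so it suffices to understand how $\gcd(u,xy)$ behaves as $u$ ranges over the generators $u=e_1\cdots e_k$ of $I(G)^{[k]}$, where $M=\{e_1,\dots ,e_k\}$ runs over the $k$-matchings of $G$. The leaf hypothesis gives the key reduction: $x$ divides $u$ precisely when $\{x,y\}\in M$, and since $M$ is a matching, $y$ divides $u$ at most once and is carried by a single edge of $M$.

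For the inclusion $I(G-\{x,y\})^{[k-1]}\subseteq I(G)^{[k]}:(xy)$, I would begin with a generator $f_1\cdots f_{k-1}$, where $\{f_1,\dots ,f_{k-1}\}$ is a matching of $G-\{x,y\}$. Since none of the $f_i$ meets $x$ or $y$, and $\{x,y\}\in E(G)$, adjoining $\{x,y\}$ produces a matching of $G$ of size $k$. Hence $(f_1\cdots f_{k-1})\cdot xy$ is a generator of $I(G)^{[k]}$, which shows $f_1\cdots f_{k-1}\in I(G)^{[k]}:(xy)$. This step uses $k\geq 2$ so that $k-1\geq 1$.

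For the reverse inclusion, I would verify that every generator $u/\gcd(u,xy)$ of the colon ideal lies in $I(G-\{x,y\})^{[k-1]}$, splitting into cases according to how $M$ meets $\{x,y\}$. If $\{x,y\}\in M$, then $\gcd(u,xy)=xy$, and dividing it out recovers the product of the remaining $k-1$ edges of $M$; these avoid both $x$ and $y$, so they form a $(k-1)$-matching of $G-\{x,y\}$ and the quotient is exactly a generator of the target ideal. If $\{x,y\}\notin M$, then $x\nmid u$, so $\gcd(u,xy)$ equals $1$ or $y$ according to whether some edge of $M$ covers $y$; in either case stripping off at most one factor of $y$ leaves a monomial divisible by the product of $k-1$ of the edges of $M$ that avoid $\{x,y\}$, and this product is a generator of $I(G-\{x,y\})^{[k-1]}$. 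Thus the quotient is a monomial multiple of such a generator.

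The computation is essentially mechanical, and the only place requiring care is the subcase $\{x,y\}\notin M$ while $y$ is nonetheless covered by some edge $e_j=\{y,z\}$ of $M$. Here $\gcd(u,xy)=y$ and the resulting generator is $z\cdot\prod_{i\neq j}e_i$; I must identify the factor $\prod_{i\neq j}e_i$ as a genuine $(k-1)$-matching product in $G-\{x,y\}$, which holds precisely because each $e_i$ with $i\neq j$ avoids both $x$ (the leaf hypothesis) and $y$ (since $e_j$ is the unique edge of the matching through $y$). This bookkeeping of the variable $y$ is the only real obstacle; every other case follows immediately from $x$ being a leaf.
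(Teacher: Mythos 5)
Your proposal is correct and takes essentially the same route as the paper: the easy inclusion is argued identically, and the reverse inclusion rests on the same two observations (the leaf hypothesis forces $\{x,y\}$ to be the only edge through $x$, and a matching covers $y$ at most once), organized by the same case split. The only cosmetic difference is that you work with the colon ideal's generators via the $u/\gcd(u,xy)$ description, whereas the paper chases an arbitrary monomial $u$ with $uxy$ divisible by a matching monomial; the combinatorial content, including the delicate subcase where $y$ is matched by an edge other than $\{x,y\}$, is the same.
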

\begin{proof}
	If $\mat(G)<k$, then $\mat(G-\{x,y\})<k-1$ by Lemma~\ref{lem: matching number leaf edge removal}. Then the equality is immediate as both ideals are equal to the zero ideal.
	
	Therefore, let us assume that $2\leq k \leq \mat(G)$. It is clear that $I(G-\{x,y\})^{[k-1]}$ is contained in $I(G)^{[k]}:(xy)$. To see the reverse, let $u$ be a monomial in $I(G)^{[k]}:(xy)$. Then there exists a matching $\{e_1,\dots ,e_k\}$ of $G$ such that $uxy$ is divisible by $e_1\dots e_k$. If $x$ divides $e_1\dots e_k$, then we may assume that $e_1=\{x,y\}$ since $y$ is the only neighbor of $x$. Then $u$ is divisible by $e_2\dots e_k$ and $u\in I(G-\{x,y\})^{[k-1]}$ as $\{e_2,\dots ,e_k\}$ is a matching of $G-\{x,y\}$.
	
	Suppose that $x$ does not divide $e_1\dots e_k$. Then $uy$ is divisible by $e_1\dots e_k$. Since $\{e_1,\dots ,e_k\}$ is a matching, we may assume that $y$ does not divide $e_2\dots e_k$. Hence $u$ is divisible by $e_2\dots e_k$ and the result follows as in the previous case.
\end{proof}

\begin{lemma}\label{lem: aim bound for removing distance leaf}
	Let $G$ be a forest with matching number $\mat(G)\geq 2$. Let $x$ be a distant leaf of $G$ with the neighbor $y$. Then for any $2\leq k \leq  \mat(G)$,
	\[\aim(G-\{x,y\},k-1)+1 \leq \aim(G,k).\]
\end{lemma}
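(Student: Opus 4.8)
The plan is to prove the inequality by taking a witnessing $(k-1)$-admissable matching $N$ in the smaller forest $H = G - \{x,y\}$ of maximum size $\aim(H,k-1)$ and lifting it to a $k$-admissable matching of $G$ by adjoining the distant edge $\{x,y\}$. First I would fix a $(k-1)$-admissable partition $N = N_1 \cup \dots \cup N_r$ of $N$ for $H$, so the sequence $(|N_1|,\dots,|N_r|)$ is $(k-1)$-admissable, meaning $\sum_i |N_i| \leq r + (k-1) - 1 = r + k - 2$. The natural candidate is to set $M = N \cup \{\{x,y\}\}$ and use the partition $M_1 = N_1, \dots, M_r = N_r, M_{r+1} = \{\{x,y\}\}$. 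By Remark~\ref{lem: adding 1 at the end}, appending a new block of size $1$ keeps the sequence $k$-admissable: indeed $(|N_1|,\dots,|N_r|,1)$ is $(k-1)$-admissable, hence certainly $k$-admissable, so condition (4) of Definition~\ref{def: k-admissable matching} holds. Condition (5) is immediate for the new singleton block since its induced subgraph is a single edge, hence a forest, and the old blocks $N_i$ already induce forests in $H$, which are induced subgraphs of $G$ as well because $H$ is induced in $G$.

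The main obstacle, and the only place where the distant leaf hypothesis is genuinely used, is verifying condition (3): I must check that $\{x,y\}$ forms a gap with every edge $e \in N$. Since $N$ is a matching of $H = G - \{x,y\}$, no edge of $N$ meets $x$ or $y$, so $\{x,y\}$ and $e$ are disjoint; what remains is to rule out a cross edge in $G$ joining $\{x,y\}$ to $e$. Because $x$ is a leaf, its only neighbor is $y$, so no edge can connect $x$ to a vertex of $e$. The remaining danger is an edge from $y$ to some endpoint of $e$. Here is where I invoke that $\{x,y\}$ is a \emph{distant} edge: $y$ has at most one neighbor of degree greater than $1$ (and that neighbor, call it $z$ when it exists, is the only neighbor of $y$ that can lie on an edge of the matching $N$, since every other neighbor of $y$ is a leaf and thus cannot be an endpoint of any edge of $H$).

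The subtlety is that $z$ itself \emph{could} be an endpoint of some edge $e_0 \in N$, which would create a gap violation between $\{x,y\}$ and $e_0$. To handle this I would argue that $N$ can be chosen to avoid using any edge incident to $z$. Concretely, if some $e_0 \in N$ is incident to $z$, I would replace $N$ by $N' = N \setminus \{e_0\}$ together with a compensating argument: by Lemma~\ref{lem:every nonempty subset is k-admissable}, $N'$ is still $(k-1)$-admissable, but dropping an edge lowers the size by one, which is not good enough on its own. A cleaner route is to choose, among all maximum $(k-1)$-admissable matchings of $H$, one that does not use any edge at $z$; this is possible because $z$ is a single vertex and any edge at $z$ can be swapped out without decreasing the admissable matching number, exploiting that the neighbors of $y$ other than $z$ are leaves and hence isolated in $H$. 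I expect this swapping/avoidance argument to be the technical heart of the proof, and I would spend the bulk of the writeup making precise why a maximum $(k-1)$-admissable matching of $H$ avoiding the vertex $z$ exists.

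Once such an $N$ is secured, conditions (1) and (2) are trivial (the blocks partition $M$ and are pairwise disjoint since $\{x,y\}$ is a brand-new edge disjoint from all of $N$), condition (3) holds by the distant-edge analysis above, and conditions (4),(5) hold as noted. Therefore $M = N \cup \{\{x,y\}\}$ is a $k$-admissable matching of $G$ of size $|N| + 1 = \aim(H,k-1) + 1$, which gives $\aim(G,k) \geq \aim(G - \{x,y\}, k-1) + 1$ as claimed.
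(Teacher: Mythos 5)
Your setup is the same as the paper's (lift a maximum $(k-1)$-admissable matching $N$ of $H=G-\{x,y\}$ by adjoining $\{x,y\}$), and your first case—when $\{x,y\}$ forms a gap with every edge of $N$, append it as a new singleton block—matches the paper exactly. But the way you handle the problematic case contains a genuine gap. Your plan is to choose, among all maximum $(k-1)$-admissable matchings of $H$, one avoiding every edge incident to $z$ (the unique non-leaf neighbor of $y$), and you defer the justification of this "swapping/avoidance" claim to future work. That claim is false. Take $G$ to be the path $x-y-z-w$ and $k=2$: here $x$ is a distant leaf, $H=G-\{x,y\}$ is the single edge $\{z,w\}$, and the \emph{only} maximum $1$-admissable matching of $H$ is $\{\{z,w\}\}$, which uses $z$. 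No swap is available, yet the lemma is true ($\aim(G,2)=2$), so any proof built on avoiding $z$ cannot be completed.

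The idea you are missing—and the one the paper uses—is that condition (3) of Definition~\ref{def: k-admissable matching} only constrains pairs of edges lying in \emph{different} blocks, so the offending edge need not be avoided at all: it can be absorbed. Since $x$ is a distant leaf, every neighbor of $y$ other than $x$ and $z$ is a leaf of $G$ and hence isolated in $H$, so at most one edge $e\in N$ (the one containing $z$, if any) fails to form a gap with $\{x,y\}$. If such an $e$ exists, say $e\in N_1$, the paper simply sets $N_1'=N_1\cup\{\{x,y\}\}$ and keeps the other blocks; the gap condition between distinct blocks is untouched, condition (5) is automatic because every induced subgraph of the forest $G$ is a forest, and the sequence $(|N_1|+1,|N_2|,\dots,|N_r|)$ has sum at most $(r+k-2)+1=r+k-1$, i.e.\ it is $k$-admissable. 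In other words, the one unit of slack gained in passing from $(k-1)$-admissable to $k$-admissable sequences is exactly what permits enlarging a single block by one edge; this is the mechanism your proposal does not exploit, and it is why the lemma holds in cases (like the path above) where your avoidance strategy is impossible.
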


\begin{proof}
	By Lemma~\ref{lem: matching number leaf edge removal} we know that the matching number of $G-\{x,y\}$ is at least $k-1$. Then by Remark~\ref{rk:properties of admissable matchings}, the forest $G-\{x,y\}$ has a $(k-1)$-admissable matching. Let $M$ be a $(k-1)$-admissable matching of $G-\{x,y\}$ of maximum cardinality. We will show that $M'=M\cup \{\{x,y\}\}$ is a $k$-admissable matching of $G$. Let $M=M_1 \cup \dots \cup M_r$ be a $(k-1)$-admissable partition of $M$ for $G-\{x,y\}$.
	
If for every $e\in M$ the edges $e$ and $\{x,y\}$ form a gap in $G$, then $M'=M_1\cup \dots \cup M_r \cup \{\{x,y\}\}$ is a $k$-admissable partition of $M'$ for $G$ by Remark~\ref{lem: adding 1 at the end}. So, suppose that there exists an edge $e\in M$ such that $e$ and $\{x,y\}$ do not form a gap in $G$. Since $x$ is a distant leaf of $G$, it follows that $\{\{x,y\},f\}$ is a gap in $G$ for every $f\in M\setminus \{e\}$. Without loss of generality, suppose that $e\in M_1$. Let $M_1'=M_1\cup \{\{x,y\}\}$. Then $M'=M_1'\cup M_2 \cup \dots \cup M_r$ is a $k$-admissable partition of $M'$ for $G$.
\end{proof}

\begin{remark}
	The lemma above is incorrect for an arbitrary leaf $x$. For example, consider the tree $G$ in Figure~\ref{graph} and let $k=2$. Then $\aim(G-\{x_1, x_2\},1)=\indm(G-\{x_1,x_2\})=2$ but $\aim(G,2)=2$.
\end{remark}

%----------------------------------------------------------
%           UPPER BOUND FOR FORESTS
%----------------------------------------------------------

\begin{theorem}\label{thm: upper bound for forest}
	If $G$ is a forest, then $\reg(I(G)^{[k]})\leq \aim(G,k)+k$ for every $1\leq k\leq \mat(G)$.
\end{theorem}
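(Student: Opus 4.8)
The plan is to induct on the number of vertices of $G$ (equivalently on $\mat(G)$), applying the short exact sequence behind Lemma~\ref{lem: key lemma} to a well-chosen leaf edge. First I dispose of the two extreme values of $k$. For $k=1$ we have $I(G)^{[1]}=I(G)$, so Theorem~\ref{thm:reg of forest} gives $\reg(I(G)^{[1]})=\indm(G)+1=\aim(G,1)+1$ by Remark~\ref{rk:properties of admissable matchings}, which is the asserted bound with equality. For $k=\mat(G)$, Theorem~\ref{thm: highest power has linear quotients} says $I(G)^{[\mat(G)]}$ has a linear resolution, so $\reg(I(G)^{[\mat(G)]})=2\mat(G)=\mat(G)+\aim(G,\mat(G))$, again matching the bound since $\aim(G,\mat(G))=\mat(G)$ for a forest. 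This last case is the true base of the induction and is where the highest-power theorem enters decisively.

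So assume $2\le k<\mat(G)$. By Lemma~\ref{lem:distant leaf exists} the forest $G$ has a distant leaf $x$ with neighbor $y$; I take $m=xy$. Lemma~\ref{lem: key lemma} gives
\[\reg(I(G)^{[k]})\le\max\{\reg(I(G)^{[k]}:(xy))+2,\ \reg(I(G)^{[k]}+(xy))\}.\]
The colon term is clean: by Lemma~\ref{lem:colon by leaf edge}, $I(G)^{[k]}:(xy)=I(G')^{[k-1]}$ with $G'=G-\{x,y\}$, and $\mat(G')=\mat(G)-1\ge k-1$ by Lemma~\ref{lem: matching number leaf edge removal}. The inductive hypothesis on the smaller forest $G'$ yields $\reg(I(G')^{[k-1]})\le\aim(G',k-1)+(k-1)$, and Lemma~\ref{lem: aim bound for removing distance leaf} supplies $\aim(G',k-1)+1\le\aim(G,k)$ — precisely the inequality needed to absorb the extra $+2$ — so the colon term is at most $\aim(G,k)+k$.

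The heart of the argument is the sum term $I(G)^{[k]}+(xy)$. Since $x$ is a leaf, every generator of $I(G)^{[k]}$ involving $x$ is divisible by $xy$, so $I(G)^{[k]}+(xy)=I(G-x)^{[k]}+(xy)$, where $A:=I(G-x)^{[k]}$ does not involve $x$. Applying the exact sequence of Lemma~\ref{lem: key lemma} once more to $A$ and $xy$, and using that $A:(xy)=A:y$ because $x$ is absent from $A$, I obtain
\[\reg\bigl(I(G-x)^{[k]}+(xy)\bigr)\le\max\{\reg(A),\ \reg(A:y)+1\}.\]
The term $\reg(A)=\reg(I(G-x)^{[k]})$ is controlled by the inductive hypothesis on the smaller forest $G-x$, together with $\aim(G-x,k)\le\aim(G,k)$ from Remark~\ref{rk:properties of admissable matchings} (and Corollary~\ref{cor:restriction induced subgraph} handles degenerate cases where the relevant power vanishes). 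The remaining term is a colon by a single \emph{vertex} rather than an edge, so $A:y$ is not itself a squarefree power and cannot be fed directly back into the theorem; bounding $\reg(A:y)$ is the crux.

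Here the distant-leaf hypothesis does the work, and I would split on $\deg_G(y)$. If $\deg_G(y)=1$ then $\{x,y\}$ is an isolated edge, $y$ is isolated in $G-x$, $A:y=A$, and $\reg(A:y)+1=\reg(I(G')^{[k]})+1\le\aim(G',k)+k+1$; since adjoining the isolated edge $\{x,y\}$ to a $k$-admissable matching of $G'$ keeps it $k$-admissable (Remark~\ref{lem: adding 1 at the end}), one has $\aim(G',k)+1\le\aim(G,k)$ and the bound $\aim(G,k)+k$ follows. If $\deg_G(y)\ge 2$, then because $x$ is distant, $y$ has a single non-leaf neighbor, and $A:y$ decomposes explicitly as $I(G-\{x,y\})^{[k]}$ plus terms $w\cdot I(G-\{x,y,w\})^{[k-1]}$ ranging over the neighbors $w$ of $y$ in $G-x$, all but one of which are leaves; the effect of the colon by the non-isolated vertex $y$ should lower the admissable-matching bound by one, giving $\reg(A:y)+1\le\aim(G,k)+k$. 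The step I expect to require the most care is exactly this reconciliation: the individual pieces of $A:y$ can have regularity exceeding the target (their excess generators becoming redundant in the sum, as one already sees in small examples such as $P_6$), so the bound must be extracted while simultaneously tracking how $\aim$ behaves under deletion of $y$ and its leaf-neighbors.
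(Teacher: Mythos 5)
Your overall strategy---induction, reduction at a distant leaf $x$ with neighbor $y$, the exact-sequence bound of Lemma~\ref{lem: key lemma}, and the treatment of the colon term $I(G)^{[k]}:(xy)=I(G-\{x,y\})^{[k-1]}$ via Lemma~\ref{lem:colon by leaf edge}, induction, and Lemma~\ref{lem: aim bound for removing distance leaf}---is the same as the paper's, and that part of your argument is sound. (One small imprecision along the way: the inequality $\reg(A+(xy))\le\max\{\reg(A),\,\reg(A:(xy))+1\}$ is not the statement of Lemma~\ref{lem: key lemma}, which bounds $\reg(I)$ in terms of the other two ideals; it is a different, though correct, consequence of the same short exact sequence and would need to be justified separately.)

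The genuine gap is exactly where you flag it: the case $\deg_G(y)\ge 2$ of the claim $\reg(A:y)+1\le\aim(G,k)+k$, where $A=I(G-x)^{[k]}$. There you have the decomposition
\[
A:y \;=\; I(G-\{x,y\})^{[k]} \;+\; \sum_{w\in N_G(y)\setminus\{x\}} w\, I(G-\{x,y,w\})^{[k-1]},
\]
and you assert the desired bound ``should'' follow; but the regularity of a sum of ideals is not controlled by the regularities of the summands, and no mechanism is offered for the cross-terms coming from the other leaf neighbors $x_2,\dots,x_r$ of $y$ and the non-leaf neighbor $z$. This is precisely the hard step, and the paper's proof is organized to avoid ever facing this ideal: instead of coloning by $y$ after adding the single generator $xy$, it first adds \emph{all} leaf edges at $y$, setting $I_i=I(G)^{[k]}+(x_1y,\dots,x_iy)$, and bounds each intermediate colon $I_i:(x_{i+1}y)$ by the same inductive estimate, the point being that $\{x_{i+1},y\}$ is still a distant edge of $G_i=G-\{x_1,\dots,x_i\}$, so Lemma~\ref{lem: aim bound for removing distance leaf} applies at every stage (this is Eq.~\eqref{eq: reg bound duplicate leaf lemma} of the paper). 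Only then does it colon by $y$: since $I_r$ contains $x_1y,\dots,x_ry$, the ideal $I_r:(y)$ contains the variables $x_1,\dots,x_r$, which wipes out exactly the cross-terms blocking your argument. What remains is either $I(G_r)^{[k]}+(x_1,\dots,x_r)$ (when every neighbor of $y$ is a leaf) or $zI(G-\{y,z\})^{[k-1]}+I(G-\{y,z\})^{[k]}+(x_1,\dots,x_r)$; in the latter case one further application of Lemma~\ref{lem: key lemma}, now with the variable $z$, splits it into $I(G-\{x_1,y,z\})^{[k-1]}$ and $I(G-\{y,z\})^{[k]}$, each handled by induction together with the inequalities $\aim(G-\{x_1,y\},k-1)+1\le\aim(G,k)$ and $\aim(G-\{y,z\},k)+1\le\aim(G,k)$ (extend a matching by the edge $\{x_1,y\}$). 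Without this reorganization, or a substitute argument for your bound on $\reg(A:y)$, your proof does not close.
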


\begin{proof}
	We use induction on $|V(G)|+k$. First note that if $k=1$, then the statement follows from Theorem~\ref{thm:reg of forest} as $\aim(G,1)=\indm(G)$. Also, if $k=\mat(G)$, then by Remark~\ref{rk:properties of admissable matchings} we have $\aim(G,\mat(G))=\mat(G)$ and the result follows from Theorem~\ref{thm: highest power has linear quotients}. Therefore, let us assume that $2\leq k<\mat(G)$.
	
	Then by Lemma~\ref{lem:distant leaf exists}, the forest $G$ has a distant leaf $x_1$ and a unique neighbor $y$. Let $x_1,\dots ,x_r$ be the neighbors of $y$ of degree $1$. We set $I_i=I(G)^{[k]}+(x_1y, \dots ,x_iy)$ and $G_i=G-\{x_1,\dots ,x_i\}$ for each $1\leq i \leq r$. Moreover, we set $G_0=G$ and $I_0=I(G)^{[k]}$. Observe that for each $0\leq i\leq r-1$ 
	\[I_i:(x_{i+1}y)=(I(G_i)^{[k]}:(x_{i+1}y))+(x_1,\dots ,x_i)\]
	and $\{x_{i+1},y\}$ is a distant edge of $G_i$. We claim that 
	\begin{equation}\label{eq: reg bound duplicate leaf lemma}
	\reg(I_i:(x_{i+1}y))\leq \aim(G,k)+k-2 \text{ for all } 0\leq i \leq r-1.
	\end{equation}
Indeed, for each $0\leq i \leq r-1$, since $\mat(G_i)=\mat(G)$ and $\mat(G_i-\{x_{i+1},y\})\geq k-1$, we obtain
\[
\begin{array}{cclr}
	\reg(I_i:(x_{i+1}y)) & = & \reg(I(G_i)^{[k]}:(x_{i+1}y)) & \text{(Remark~\ref{rk: adding variable}) }  \\
& = & \reg(I(G_i-\{x_{i+1},y\})^{[k-1]}) & \text{(Lemma~\ref{lem:colon by leaf edge})} \\
& \leq & \aim(G_i-\{x_{i+1},y\}, k-1)+ k-1 &  \text{(induction assumption)}\\
& \leq &  \aim(G_i,k)+k-2 & \text{(Lemma~\ref{lem: aim bound for removing distance leaf})} \\
& \leq & \aim(G,k)+k-2 & (\text{(\ref{rk:properties item induced subgraph}) of Remark~\ref{rk:properties of admissable matchings}})  
\end{array}
\]
which proves Eq.~\eqref{eq: reg bound duplicate leaf lemma}. We can apply Eq.~\eqref{eq: reg bound duplicate leaf lemma} and Lemma~\ref{lem: key lemma} successively to eliminate $x_1$ and its duplicates as follows.
	\begin{eqnarray*}
		\reg(I(G)^{[k]})&\leq & \max\{\reg(I(G)^{[k]}:(x_1y))+2, \reg(I_1)\} \\
		& \leq & \max\{\aim(G,k)+k, \reg(I_1:(x_2y))+2, \reg(I_2)\}\\
		&\leq & \max\{\aim(G,k)+k,  \reg(I_2:(x_3y))+2, \reg(I_3)\} \\
		&\leq & \vdots\\
		&\leq & \max\{\aim(G,k)+k, \reg(I_r)\}.
	\end{eqnarray*}
	Therefore, it suffices to show that $\reg(I_r)\leq \aim(G,k)+k$. By Lemma~\ref{lem: key lemma} we have
	\[\reg(I_r)\leq \max\{\reg(I_r:(y))+1, \reg(I_r+(y))\}.\]
	Then it suffices to show that the maximum in the above inequality is at most $\aim(G,k)+k$. Note that by Lemma~\ref{lem: matching number leaf edge removal} we have
	\[\mat(G-\{y\})=\mat(G-\{x_1,y\})=\mat(G)-1\geq k.\]
	Remark~\ref{rk: adding variable}, induction assumption and (\ref{rk:properties item induced subgraph}) of Remark~\ref{rk:properties of admissable matchings} imply
	\[\reg(I_r + (y))=\reg(I(G-\{y\})^{[k]})\leq \aim(G-\{y\},k)+k \leq \aim(G,k)+k.\]
	Since $x_1$ is a distant leaf of $G$, either $N_G(y)=\{x_1,\dots ,x_r\}$ or  $N_G(y)=\{x_1,\dots ,x_r,z\}$ for some vertex $z$ of degree greater than $1$. We will consider these cases separately. 
	
	\textbf{Case 1:} Suppose that  $N_G(y)=\{x_1,\dots ,x_r\}$. Then the induced subgraph of $G$ on $N_G[y]$ is a connected component of $G$ and $\aim(G,k)\geq \aim(G_r,k)+1$ by Remark~\ref{lem: adding 1 at the end}. Since $I_r:(y)=I(G_r)^{[k]}+(x_1,\dots ,x_r)$, by Remark~\ref{rk: adding variable} and induction assumption we get
	\[\reg(I_r:(y))= \reg(I(G_r)^{[k]})\leq \aim(G_r, k)+k \leq \aim(G,k)+k-1. \]
	
	\textbf{Case 2:} Suppose that $N_G(y)=\{x_1,\dots ,x_r,z\}$ for some vertex $z$ of degree greater than $1$. Observe that $I_r
	:(y)=zI(G-\{y,z\})^{[k-1]}+I(G-\{y,z\})^{[k]}+(x_1,\dots ,x_r)$. By Lemma~\ref{lem: key lemma}
	\begin{equation}\label{eq: case 2 goal}
	\reg(I_r:(y))\leq \max\{\reg((I_r:(y)):(z))+1, \reg((I_r:(y))+(z))\}.
	\end{equation}
	We will now show that the maximum in \eqref{eq: case 2 goal} is at most $\aim(G,k)+k-1$ which will complete the proof. Observe that by Remark~\ref{rk: adding variable} we have 
	\begin{equation*}\label{eq:case 2 colon}
	\reg((I_r:(y)):(z))= \reg(I(G-\{y,z\})^{[k-1]})=  \reg(I(G-\{x_1, y,z\})^{[k-1]}). 
	\end{equation*}
	 Applying respectively Remark~\ref{rk: adding variable}, Corollary~\ref{cor:restriction induced subgraph}, induction assumption on $G-\{x_1,y\}$ and Lemma~\ref{lem: aim bound for removing distance leaf}, we obtain
		\begin{eqnarray*}
		\reg((I_r:(y)):(z))&\leq & \reg(I(G-\{x_1, y\})^{[k-1]}) \\
		& \leq & \aim(G-\{x_1,y\},k-1)+k-1 \\
		&\leq & \aim(G,k)+k-2.
	\end{eqnarray*}
Observe that Remark~\ref{rk: adding variable} implies $\reg((I_r:(y))+(z))= \reg(I(G-\{y,z\})^{[k]})$. We may assume that the matching number of $G-\{y,z\}$ is at least $k$ since otherwise the proof is immediate. By induction, we have
\begin{equation*}\label{eq:case 2 addition}
	 \reg(I(G-\{y,z\})^{[k]}) \leq \aim(G-\{y,z\},k)+k.
\end{equation*}
Therefore, it remains to show that $\aim(G-\{y,z\},k)+1\leq \aim(G,k)$. Indeed, keeping Remark~\ref{lem: adding 1 at the end} in mind, any $k$-admissable matching of $G-\{y,z\}$ can be extended to a $k$-admissable matching of $G$ by adding the edge $\{x_1,y\}$.
\end{proof}

\subsection{Second squarefree powers}
The goal of this section is to show that the upper bound in Theroem~\ref{thm: upper bound for forest} is attained when $k=2$. To this end, we will show that Betti numbers do not vanish in certain degrees. The following fact was established in the proof of \cite[Theorem~2.1]{EHHM}.
\begin{lemma}\label{lem:oberwolfach}
		If $M$ is a $1$-admissable matching of $G$ which is also a perfect matching, then for all $1\leq k \leq |M|$ \[b_{|M|-k+1,2|M|}(S/I(G)^{[k]})\neq 0.\]
\end{lemma}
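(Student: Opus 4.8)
The plan is to first observe that the hypotheses pin down $G$ completely. Since $M$ is a perfect matching, $\bigcup_{e\in M}e=V(G)$, and since $M$ is $1$-admissable it is an induced matching (Remark~\ref{rk:properties of admissable matchings}), so the induced subgraph of $G$ on $V(G)$ — that is, $G$ itself — consists of exactly the edges of $M$. Hence $G$ is a disjoint union of $m:=|M|$ edges. Writing $M=\{x_1y_1,\dots,x_my_m\}$ and $S=\Bbbk[x_1,y_1,\dots,x_m,y_m]$, the matchings of $G$ of size $k$ are precisely the $k$-subsets of $M$, so
\[ I(G)^{[k]}=\Bigl(\,\textstyle\prod_{j\in A}x_jy_j \;:\; A\subseteq[m],\ |A|=k\,\Bigr). \]
This is a squarefree monomial ideal, and the target Betti number lives in internal degree $2m$, corresponding to the full variable set, i.e.\ the multidegree $\mathbf a=(1,\dots,1)=\prod_{j=1}^m x_jy_j$.

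Next I would compute this top multigraded Betti number through the lcm lattice $L$ of $I:=I(G)^{[k]}$. The least common multiple of a family of generators $\prod_{j\in A}x_jy_j$ is $\prod_{j\in B}x_jy_j$ with $B=\bigcup A$, so $L$ is isomorphic to $\{\hat 0\}\cup\{B\subseteq[m]:|B|\ge k\}$ ordered by inclusion, with top element $\hat 1=[m]$ of multidegree $2m$. By the Gasharov–Peeva–Welker description of multigraded Betti numbers via the lcm lattice, $b_{i,\mathbf a}(S/I)=\dim_\Bbbk\widetilde H_{i-2}\bigl(\Delta(\hat 0,\hat 1);\Bbbk\bigr)$, where $\Delta(\hat 0,\hat 1)$ is the order complex of the open interval. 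That open interval is exactly the poset
\[ P=\{\,B\subseteq[m]\;:\;k\le |B|\le m-1\,\} \]
ordered by inclusion. With $i=m-k+1$ the claim becomes $\widetilde H_{m-k-1}(\Delta(P);\Bbbk)\neq 0$.

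Finally I would identify $\Delta(P)$ as the rank-selected subcomplex of the Boolean lattice $B_m$ obtained by keeping the ranks $S=\{k,k+1,\dots,m-1\}$. Since $B_m$ is Cohen–Macaulay, Stanley's rank-selection theorem says $\Delta(P)$ has reduced homology concentrated in top dimension $|S|-1=m-k-1$, with $\dim_\Bbbk\widetilde H_{m-k-1}(\Delta(P);\Bbbk)$ equal to the number $\beta_m(S)$ of permutations $w\in\mathfrak S_m$ with descent set exactly $S$. This number is positive: the permutation $w=(1,2,\dots,k-1,m,m-1,\dots,k)$ ascends on positions $1,\dots,k-1$ and descends on positions $k,\dots,m-1$, so $D(w)=S$. (When $k=m$, $S=\varnothing$, $P$ is the empty poset, and the formula returns $\widetilde H_{-1}(\{\varnothing\})=\Bbbk$, matching the principal ideal $(x_1y_1\cdots x_my_m)$.) Hence $b_{m-k+1,2m}(S/I)\neq 0$, as required.

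The main obstacle, and the part deserving the most care, is the homological bookkeeping: fixing the exact index shift in the lcm-lattice (equivalently Hochster) formula so that the Betti number in homological degree $m-k+1$ is matched with $\widetilde H_{m-k-1}$, and verifying that the relevant homology of the rank-selected poset is both concentrated in and nonzero in precisely that degree. Once $P$ is correctly identified as the selection of the top ranks of $B_m$, the nonvanishing reduces to the elementary fact that some permutation realizes the prescribed descent set. One could alternatively bypass rank-selection theory by computing via Hochster's formula on the Stanley–Reisner complex of $I$ — for $k=1$ this complex is the boundary of the $m$-dimensional cross-polytope $S^{m-1}$ — but the rank-selection route makes the uniform nonvanishing across all $k$ most transparent.
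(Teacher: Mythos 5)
Your argument is correct. It is worth noting that the paper does not actually prove this lemma: it quotes it as ``established in the proof of \cite[Theorem~2.1]{EHHM}'', so the fair comparison is with that argument and with the paper's own extension of it, Lemma~\ref{lem:2 admissable perfect betti number}. Both begin exactly as you do, observing that a perfect induced matching forces $G$ to be a disjoint union of $m=|M|$ edges, so that $I(G)^{[k]}$ is generated by the products $\prod_{j\in A}x_jy_j$ over $k$-subsets $A\subseteq[m]$. From there the methods diverge. The toolkit used in this paper for such nonvanishing statements (see the proofs of Lemma~\ref{lem:2 admissable perfect betti number} and Lemma~\ref{lem: disconnected simplicial complex}) consists of short exact sequences of the form $0\to \bigl(S/(I:e)\bigr)(-2)\to S/I\to S/(I+(e))\to 0$ together with the fact that nonzero Betti numbers of squarefree ideals sit in squarefree multidegrees, and Hochster's formula via upper-Koszul complexes. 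You instead compute the top multigraded Betti number in closed form: the lcm lattice of $I(G)^{[k]}$ is the truncated Boolean lattice $\{\hat 0\}\cup\{B\subseteq[m]:|B|\ge k\}$, the Gasharov--Peeva--Welker formula converts $b_{m-k+1,(1,\dots,1)}(S/I)$ into $\widetilde H_{m-k-1}$ of the order complex of the open interval $\{B: k\le |B|\le m-1\}$, and Stanley's rank-selection theorem for $B_m$ shows this homology is concentrated in top dimension $m-k-1$ with rank $\beta_m(\{k,\dots,m-1\})$, which your explicit permutation shows is positive. I checked the delicate indexing points: the convention $b_{i,\mathbf a}(S/I)=\dim_\Bbbk\widetilde H_{i-2}(\Delta(\hat 0,\mathbf a);\Bbbk)$ is the correct one, the selected rank set has size $m-k$, and the degeneration at $k=m$ to $\widetilde H_{-1}$ of the void interval is handled correctly. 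As for what each approach buys: yours is self-contained modulo two standard external theorems and in fact gives strictly more than the lemma asks, namely the exact value $b_{m-k+1,2m}\bigl(S/I(G)^{[k]}\bigr)=\beta_m(\{k,\dots,m-1\})=\binom{m-1}{k-1}$ (a permutation with $w_1<\cdots<w_k>w_{k+1}>\cdots>w_m$ forces $w_k=m$ and is determined by the increasing prefix); the exact-sequence route, on the other hand, uses only machinery already deployed elsewhere in the paper and, more importantly, is the one that extends to Lemma~\ref{lem:2 admissable perfect betti number}, where the matching is only $2$-admissable and the lcm lattice is no longer a truncated Boolean lattice.
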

We now extend Lemma~\ref{lem:oberwolfach} to $2$-admissable matchings as follows:
\begin{lemma}\label{lem:2 admissable perfect betti number}
	If $M$ is a $2$-admissable perfect matching of $G$, then for all $2\leq k \leq |M|$ \[b_{|M|-k+1,2|M|}(S/I(G)^{[k]})\neq 0.\]
\end{lemma}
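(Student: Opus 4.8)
The plan is to determine $G$ explicitly from the hypothesis, to translate the statement into simplicial homology via Hochster's formula, and then to read off the required nonvanishing from Lemma~\ref{lem:oberwolfach} through a Mayer--Vietoris argument. Write $n=|M|$; since $M$ is perfect there are no isolated vertices and $|V(G)|=2n$. The case $k=n$ is immediate, since $\prod_{e\in M}e$ is a minimal generator of $I(G)^{[n]}$ of degree $2n$, whence $b_{1,2n}(S/I(G)^{[n]})\neq 0$ and $|M|-k+1=1$. For $2\le k<n$ I would analyse a $2$-admissable partition $M=M_1\cup\dots\cup M_r$ as in Definition~\ref{def: k-admissable matching}: condition (4) forces $\sum_i|M_i|\le r+1$, so either all blocks are singletons, in which case $M$ is an induced matching and $G=nK_2$, or exactly one block, say $M_1=\{e,f\}$, has size two and the rest are singletons. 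In the first case the conclusion is exactly Lemma~\ref{lem:oberwolfach}. In the second case conditions (3) and (5) leave the edge between $e$ and $f$ as the only possible extra edge, and a short check of forests on four vertices shows $G[e\cup f]=P_4$; hence $G=P_4\sqcup(n-2)K_2$.

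It then remains to treat $G=P_4\sqcup H$ with $H=(n-2)K_2$ and $2\le k<n$. For a graph $W$ and $j\ge 0$ write $\Delta^{(j)}(W)=\{F\subseteq V(W):\mat(W[F])\le j\}$; then $I(W)^{[k]}$ is the Stanley--Reisner ideal of $\Delta^{(k-1)}(W)$, and Hochster's formula in the full multidegree gives $b_{n-k+1,2n}(S/I(G)^{[k]})=\dim_{\Bbbk}\tilde{H}_{n+k-2}(\Delta^{(k-1)}(G))$. Because $G$ is a disjoint union, $\mat$ is additive and $\Delta^{(k-1)}(G)=\bigcup_{a+b=k-1}\Delta^{(a)}(P_4)*\Delta^{(b)}(H)$, a union of simplicial joins. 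The point is that the $P_4$-factors are elementary: $\Delta^{(0)}(P_4)$ is a path on four vertices (contractible), every $\Delta^{(a)}(P_4)$ with $a\ge 2$ is the full simplex (contractible), while $\Delta^{(1)}(P_4)$ is the boundary of the tetrahedron, that is, $S^2$.

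I would split $\Delta^{(k-1)}(G)=P\cup Q\cup R$ with $P=\Delta^{(0)}(P_4)*\Delta^{(k-1)}(H)$, $Q=\Delta^{(1)}(P_4)*\Delta^{(k-2)}(H)=S^2*\Delta^{(k-2)}(H)$, and $R=\Delta^{(2)}(P_4)*\Delta^{(k-3)}(H)$, the last absorbing all terms with $a\ge 2$. Joining with a contractible factor is acyclic, so $P$, $R$, and all their mutual intersections are acyclic; hence $W:=P\cup R$ is acyclic by Mayer--Vietoris, and the same computation gives $\tilde{H}_\bullet(Q\cap W)\cong\tilde{H}_\bullet(S^2*\Delta^{(k-3)}(H))$. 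The K\"unneth formula for joins then yields $\tilde{H}_{n+k-2}(Q)\cong\tilde{H}_{n+k-5}(\Delta^{(k-2)}(H))$ and $\tilde{H}_{n+k-2}(Q\cap W)\cong\tilde{H}_{n+k-5}(\Delta^{(k-3)}(H))$. Since $H=(n-2)K_2$, the Hochster translation of Lemma~\ref{lem:oberwolfach} gives $\tilde{H}_{m+j-1}(\Delta^{(j)}((n-2)K_2))\neq 0$ for $0\le j\le n-3$, where $m=n-2$; with $j=k-2$ this is precisely $\tilde{H}_{n+k-5}(\Delta^{(k-2)}(H))\neq 0$. On the other hand $\dim\Delta^{(k-3)}(H)=m+(k-3)-1=n+k-6$, so $\tilde{H}_{n+k-5}(\Delta^{(k-3)}(H))=0$. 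Feeding these into the Mayer--Vietoris sequence of $\Delta^{(k-1)}(G)=Q\cup W$, and using $\tilde{H}_\bullet(W)=0$, makes the map out of $\tilde{H}_{n+k-2}(Q\cap W)$ emanate from $0$, so $0\neq\tilde{H}_{n+k-2}(Q)\hookrightarrow\tilde{H}_{n+k-2}(\Delta^{(k-1)}(G))$, which is the desired conclusion. The boundary value $k=2$ uses only $P\cup Q$ and is identical, and $n=2$ falls under $k=n$.

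The main obstacle is this last step: showing that the nontrivial class of the single non-acyclic piece $Q=S^2*\Delta^{(k-2)}(H)$ is not annihilated when $Q$ is glued to the contractible pieces along their overlap. Everything hinges on the degree bookkeeping, namely that the overlap homology of $\Delta^{(k-3)}(H)$ is forced into degree $n+k-5$, which is one above its dimension and therefore vanishes. I expect the care needed to set up the three-piece cover correctly, so that the overlaps really are joins with the stated filtration levels, and to track the K\"unneth degree shifts, to be the most error-prone part. An alternative, in the style of the rest of the paper, is to run the analogous argument algebraically from the short exact sequence attached to the leaf edge $ab$ of the $P_4$, using Lemma~\ref{lem:colon by leaf edge} to identify $I(G)^{[k]}:(ab)=I((n-1)K_2)^{[k-1]}$ and tracking the multidegree of $\prod_{v}v$ through the long exact sequence in $\Tor$.
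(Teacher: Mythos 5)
Your proof is correct, but after its first step it follows a genuinely different route from the paper's. Both arguments begin identically: conditions (3)--(5) of Definition~\ref{def: k-admissable matching} together with perfectness of $M$ force $G$ to be either $nK_2$ (when $M$ is induced, where Lemma~\ref{lem:oberwolfach} finishes immediately) or $P_4\sqcup(n-2)K_2$; this is exactly the paper's Claim that $G$ has $r+1$ edges. (One small wrinkle in your write-up: the dichotomy should be by whether $M$ is induced rather than by block sizes, since a size-two block whose two edges form a gap gives $G[e\cup f]=2K_2$, not $P_4$; but then $G=nK_2$ and you are back in your first case, so nothing is lost.) From there the paper argues algebraically: it forms the short exact sequence $0\to (S/J)(-2)\to S/I(G)^{[k]}\to S/K\to 0$ with $J=I(G)^{[k]}:(x_1x_2)=I\bigl((n-1)K_2\bigr)^{[k-1]}$ and $K=I(G)^{[k]}+(x_1x_2)$, obtains the nonvanishing of $\Tor_{n-k+1}((S/J)(-2),\Bbbk)_{2n}$ from Lemma~\ref{lem:oberwolfach}, and kills the obstruction term $\Tor_{n-k+2}(S/K,\Bbbk)_{2n}$ via a second short exact sequence (coloning by $x_2$) and a count of variables in squarefree supports. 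You instead pass through Hochster's formula: $I(G)^{[k]}$ is the Stanley--Reisner ideal of $\Delta^{(k-1)}(G)$, which you decompose according to the matching level on the $P_4$-factor into three joins; Mayer--Vietoris plus the K\"unneth formula for joins isolates all the relevant homology in the single non-acyclic piece $S^2*\Delta^{(k-2)}\bigl((n-2)K_2\bigr)$, whose nonvanishing is again Lemma~\ref{lem:oberwolfach} translated through Hochster, while the gluing obstruction dies by a dimension count on $\Delta^{(k-3)}\bigl((n-2)K_2\bigr)$. The two proofs are parallel in spirit---your Mayer--Vietoris sequence plays the role of the paper's long exact sequence in $\Tor$, and your dimension count plays the role of the paper's variable count---but the decompositions are genuinely different: the paper splits off the single leaf edge $x_1x_2$, whereas you filter by the whole $P_4$ component. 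The paper's route is shorter and stays inside the toolkit it has already built (Lemma~\ref{lem:colon by leaf edge}, Remark~\ref{rk: adding variable}); yours makes the topology explicit, exhibiting the nonvanishing class concretely as coming from a $2$-sphere joined with a matching complex of $(n-2)K_2$, at the cost of more bookkeeping of joins, intersections, and degree shifts---all of which checks out, including the boundary cases $k=2$, $k=n$, and $n=2$.
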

\begin{proof}
	%We proceed by induction on the number of the vertices of the graph.
	 We may assume that $\mat(G)\geq 2$ as the statement is vacuously true otherwise. If $M$ is $1$-admissable matching, then the result follows from Lemma~\ref{lem:oberwolfach}.
	
	So, let us assume that $M$ is not $1$-admissable. Let \[M=\{\{x_1,x_2\}, \{x_3,x_4\},\dots ,\{x_{2r-1},x_{2r}\}\}\] so that $|M|=r$. Since $M$ is not an induced matching of $G$, without loss of generality, we may assume that $\{x_2,x_3\}$ is an edge of $G$.
	
	\textbf{Claim:} $G$ has exactly $r+1$ edges.
	
	\textit{Proof of the claim:} Let $M=M_1\cup \dots \cup M_q$ be a $2$-admissable partition of $M$ for $G$. By condition (3) of Definition~\ref{def: k-admissable matching} we may assume that both $\{x_1,x_2\}$ and $\{x_3,x_4\}$ are in $M_1$. Since the sequence $(|M_1|,\dots ,|M_q|)$ is $2$-admissable, we have $|M_1|+\dots +|M_q|\leq q+1$. On the other hand, since $|M_1|\geq 2$ and $|M_i|\geq 1$ for all $i\geq 2$, we obtain $|M_1|=2$ and $|M_i|=1$ for each $i\geq 2$. The claim then follows from conditions (3) and (5) of Definition~\ref{def: k-admissable matching} together with the fact that $M$ is a perfect matching of $G$.  
	
	Having proved our claim, we can now write
	\[I(G)=(x_1x_2, x_2x_3, x_3x_4, x_5x_6, \dots ,x_{2r-1}x_{2r}).\] 
	 By Lemma~\ref{lem:colon by leaf edge} we set $J:=I(G)^{[k]}:(x_1x_2)=I(G-\{x_1,x_2\})^{[k-1]}$.
Also we set $K=I(G)^{[k]}+(x_1x_2)$. The short exact sequence \[0 \rightarrow \frac{S}{J}(-2) \rightarrow \frac{S}{I(G)^{[k]}} \rightarrow \frac{S}{K} \rightarrow 0\]
yields the long exact sequence 
\[\cdots \rightarrow \Tor_{r-k+2}(S/K,\Bbbk)_{2r}\rightarrow \Tor_{r-k+1}((S/J)(-2),\Bbbk)_{2r}\rightarrow \Tor_{r-k+1}(S/I(G)^{[k]},\Bbbk)_{2r} \rightarrow \cdots \]
Since $M\setminus \{\{x_1,x_2\}\}$ is perfect induced matching of $G-\{x_1,x_2\}$, by Lemma~\ref{lem:oberwolfach} we have $\Tor_{r-k+1}((S/J)(-2),\Bbbk)_{2r}\neq 0$. Therefore, it suffices to show that $\Tor_{r-k+2}(S/K,\Bbbk)_{2r}=0$.   The short exact sequence \[0 \rightarrow \frac{S}{K:(x_2)}(-1) \rightarrow \frac{S}{K} \rightarrow \frac{S}{K+(x_2)} \rightarrow 0\]
yields the long exact sequence 
\[\cdots \rightarrow \Tor_{r-k+2}((S/(K:(x_2)))(-1),\Bbbk)_{2r}\rightarrow \Tor_{r-k+2}(S/K,\Bbbk)_{2r}\rightarrow \Tor_{r-k+2}(S/(K+(x_2)),\Bbbk)_{2r}\rightarrow \cdots \]

We will now use the fact that all non-zero Betti numbers of a squarefree monomial ideal lie in squarefree multidegrees. Observe that the squarefree monomial ideal $K+(x_2)$ lies in a polynomial ring with less than $2r$ variables, more precisely, $K+(x_2)\subseteq \Bbbk[x_2,\dots ,x_{2r}]$. Then $\Tor_{i}((S/(K+(x_2))),\Bbbk)_{2r}=0$ for every $i$. On the other hand,  observe that 
\[K:(x_2)=(I(G)^{[k]}+(x_1x_2)):(x_2)=x_3I(G-\{x_2,x_3\})^{[k-1]}+I(G-\{x_2,x_3\})^{[k]}+(x_1).\]
Then the squarefree monomial ideal $K:(x_2)$ lies in a polynomial ring with $2r-2$ variables because the variables $x_2$ and $x_4$ have disappeared. Therefore $\Tor_{r-k+2}(S/(K:(x_2)),\Bbbk)_{2r-1}=0$ or, equivalently $\Tor_{r-k+2}((S/(K:(x_2)))(-1),\Bbbk)_{2r}=0$.
\end{proof}

%  GEREKSIZ!!!
% Let $L=K:(x_2)$. The short exact sequence 
%\[0 \rightarrow \frac{S}{L:(x_3)}(-1) \rightarrow \frac{S}{L} \rightarrow \frac{S}{L+(x_3)} \rightarrow 0\] yields the long exact sequence
%\[\cdots \rightarrow \Tor_{r-k+2}((S/(L:x_3)(-1)),\Bbbk)_{2r-1}\rightarrow \Tor_{r-k+2}((S/L),\Bbbk)_{2r-1}\rightarrow \Tor_{r-k+2}((S/(L+(x_3))),\Bbbk)_{2r-1}\rightarrow \cdots \]
%
%Observe that 
%\[L=K:(x_2)=(I(G)^{[k]}+(x_1x_2)):(x_2)=x_3I(G-\{x_2,x_3\})^{[k-1]}+I(G-\{x_2,x_3\})^k+(x_1)\]
%and thus $L:(x_3)=I(G-\{x_2,x_3\})^{[k-1]}+(x_1)$ and $L+(x_3)=I(G-\{x_2,x_3\})^{[k]}+(x_1,x_3)$. Then $L:(x_3)$ lies in a polynomial ring with $2r-3$ variables. Similarly, $L+(x_3)$ lies in a polynomial ring with $2r-2$ variables. Therefore
%\[\Tor_{r-k+2}((S/(L:x_3)),\Bbbk)_{2r-2}=\Tor_{r-k+2}((S/(L+(x_3))),\Bbbk)_{2r-1}=0\]
%and the result follows.
%---------------------------------------------------------------------------------------

We now give a formula for the regularity of $I(G)^{[2]}$ when $G$ is a forest.
\begin{theorem}\label{thm: second power formula}
		If $G$ is a forest with $\mat(G)\geq 2$, then $\reg(I(G)^{[2]})=\aim(G,2)+2$.
\end{theorem}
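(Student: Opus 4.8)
\textbf{Proof strategy for Theorem~\ref{thm: second power formula}.}

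The plan is to combine the upper bound already established in Theorem~\ref{thm: upper bound for forest} with a matching lower bound, so that the two meet at $\aim(G,2)+2$. Since Theorem~\ref{thm: upper bound for forest} gives $\reg(I(G)^{[2]})\leq \aim(G,2)+2$ for free, the entire work lies in proving the reverse inequality $\reg(I(G)^{[2]})\geq \aim(G,2)+2$. To do this, I would produce a specific nonvanishing graded Betti number of $S/I(G)^{[2]}$ in the right homological and internal degree. Writing $m=\aim(G,2)$, the goal is to exhibit a nonzero Betti number $b_{i,j}(S/I(G)^{[2]})$ with $j-i=\aim(G,2)+2$; concretely, the target produced by Lemma~\ref{lem:2 admissable perfect betti number} is $b_{|M|-2+1,2|M|}=b_{m-1,2m}$, which sits in regularity-degree $2m-(m-1)=m+1$ for the ideal $I(G)^{[2]}$ (equivalently $\reg$ of the ideal is one more than that of $S/I$, giving $\aim(G,2)+2$).

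The main step is to locate a $2$-admissable matching $M$ of maximum size $m=\aim(G,2)$ and restrict attention to the induced subgraph $H$ that it spans. First I would choose $M=M_1\cup\dots\cup M_r$ a $2$-admissable partition realizing $\aim(G,2)$, and let $H$ be the induced subgraph of $G$ on $\bigcup_{e\in M}e$. By condition (3) of Definition~\ref{def: k-admissable matching} the blocks $M_i$ sit in distinct gaps, by condition (5) each block spans a forest, and the $2$-admissability of $(|M_1|,\dots,|M_r|)$ forces at most one block to have size $2$ with all others of size $1$ (exactly as in the claim inside the proof of Lemma~\ref{lem:2 admissable perfect betti number}). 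Crucially, $M$ is by construction a perfect matching of $H$ and remains $2$-admissable in $H$, so Lemma~\ref{lem:2 admissable perfect betti number} applies to $H$ with $k=2$ and yields $b_{|M|-1,\,2|M|}(S/I(H)^{[2]})\neq 0$, i.e. $b_{m-1,2m}(S/I(H)^{[2]})\neq 0$. Then Corollary~\ref{cor:restriction induced subgraph} transports this nonvanishing up to $G$, giving $b_{m-1,2m}(S/I(G)^{[2]})\neq 0$ and hence $\reg(S/I(G)^{[2]})\geq 2m-(m-1)=m+1$, so $\reg(I(G)^{[2]})\geq m+2=\aim(G,2)+2$.

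The step I expect to be the main obstacle is verifying that the maximizing $2$-admissable matching $M$ can indeed be taken so that the induced subgraph $H$ on its support satisfies the precise hypotheses of Lemma~\ref{lem:2 admissable perfect betti number} — in particular that $M$ is genuinely a perfect matching of $H$ (no extra vertices of $G$ creep into the support $\bigcup_{e\in M}e$, which holds automatically) and that $M$ is $2$-admissable \emph{as a matching of $H$} rather than merely of $G$. The gap condition (3) is inherited by induced subgraphs, and condition (5) is about induced subgraphs which are unchanged, so the same partition $M=M_1\cup\dots\cup M_r$ witnesses $2$-admissability in $H$; the only subtlety is confirming that two edges forming a gap in $G$ still form a gap in the induced subgraph $H$, which is immediate since $H$ is induced. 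Once this bookkeeping is settled, the two inequalities combine to give the stated equality, and I would finish by noting that the $k=2$ case of Theorem~\ref{thm: upper bound for forest} supplies the matching upper bound so that $\reg(I(G)^{[2]})=\aim(G,2)+2$.
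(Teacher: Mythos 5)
Your proposal is correct and takes essentially the same route as the paper's proof: the upper bound comes from Theorem~\ref{thm: upper bound for forest}, and the lower bound comes from applying Lemma~\ref{lem:2 admissable perfect betti number} to the induced subgraph $H$ spanned by a maximum $2$-admissable matching (which is automatically a perfect, still $2$-admissable matching of $H$) and then lifting the nonvanishing Betti number to $G$ via Corollary~\ref{cor:restriction induced subgraph}. The bookkeeping you flag as the main obstacle is exactly what the paper dispenses with in one sentence, and your verification of it is sound.
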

\begin{proof}
	By Theorem~\ref{thm: upper bound for forest} and Corollary~\ref{cor:restriction induced subgraph} it suffices to show that for every $2$-admissable matching $M$ of $G$, the inequality $\reg(I(H)^{[2]})\geq |M|+2$ holds where $H$ is the induced subgraph of $G$ on $\cup_{e\in M}e$. Note that $M$ is a perfect matching and $2$-admissable matching of $H$. Then by Lemma~\ref{lem:2 admissable perfect betti number} we get 
	\[b_{|M|-2,2|M|}(I(H)^{[2]})=b_{|M|-1,2|M|}(S/I(H)^{[2]})\neq 0\]
	and thus $\reg(I(H)^{[2]})\geq |M|+2$ as desired.
\end{proof}
In particular, Theorem~\ref{thm: second power formula} gives a lower bound for the regularity of second squarefree power of edge ideal of any graph.

\begin{corollary}
	If $G$ is a graph with $\mat(G)\geq 2$, then $\reg(I(G)^{[2]})\geq\aim(G,2)+2$.
\end{corollary}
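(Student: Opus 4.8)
The plan is to extract the lower-bound half of the proof of Theorem~\ref{thm: second power formula}, observing that this half never used the forest hypothesis. The crucial point is that Lemma~\ref{lem:2 admissable perfect betti number} is stated for an \emph{arbitrary} graph, so the nonvanishing Betti number it provides is available to us without any restriction on $G$.

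First I would fix a $2$-admissable matching $M$ of $G$ of maximum cardinality, so that $|M|=\aim(G,2)$; such an $M$ exists since $\mat(G)\geq 2$. Next I would pass to the induced subgraph $H$ of $G$ on the vertex set $\bigcup_{e\in M}e$. Two things have to be checked here, and they are the only technical points. The matching $M$ is automatically a \emph{perfect} matching of $H$, because $V(H)$ is exactly the set of vertices covered by $M$. Moreover $M$ remains $2$-admissable \emph{as a matching of $H$}: given a $2$-admissable partition $M=M_1\cup\dots\cup M_q$ for $G$, the same partition works for $H$, since $H$ is an induced subgraph and hence the gap condition~(3) and the ``induced subgraph is a forest'' condition~(5) of Definition~\ref{def: k-admissable matching} are inherited verbatim from $G$ to $H$ (induced subgraphs of induced subgraphs agree).

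With $M$ now a perfect $2$-admissable matching of $H$, I would apply Lemma~\ref{lem:2 admissable perfect betti number} with $k=2$ to obtain $b_{|M|-1,\,2|M|}(S/I(H)^{[2]})\neq 0$, equivalently $b_{|M|-2,\,2|M|}(I(H)^{[2]})\neq 0$, whence $\reg(I(H)^{[2]})\geq 2|M|-(|M|-2)=|M|+2$. Finally, since $H$ is an induced subgraph of $G$, Corollary~\ref{cor:restriction induced subgraph} gives $\reg(I(G)^{[2]})\geq\reg(I(H)^{[2]})\geq |M|+2=\aim(G,2)+2$, which is the claim.

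There is no genuine obstacle beyond the bookkeeping verification in the second paragraph that a $2$-admissable matching of $G$ restricts to a $2$-admissable (and now perfect) matching of the induced subgraph on its support; once that is in place the statement is immediate. I expect this verification to be the only place requiring care, and it is routine because all the defining conditions of $2$-admissability are induced-subgraph stable.
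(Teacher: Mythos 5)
Your proposal is correct and is essentially the paper's own argument: the paper's one-line proof (``Follows from Corollary~\ref{cor:restriction induced subgraph}'') is shorthand for the lower-bound half of the proof of Theorem~\ref{thm: second power formula}, which---as you observe---never uses the forest hypothesis because Lemma~\ref{lem:2 admissable perfect betti number} is stated for arbitrary graphs, and your verification that a $2$-admissable matching of $G$ restricts to a perfect $2$-admissable matching of the induced subgraph on its support is exactly the step the paper leaves implicit. The only caveat is the degenerate case $\aim(G,2)\leq 1$ (possible for non-forests, e.g.\ $G=K_4$), where Lemma~\ref{lem:2 admissable perfect betti number} does not apply since it requires $2\leq k\leq |M|$, but there the bound holds trivially because $I(G)^{[2]}\neq (0)$ is generated in degree $4\geq \aim(G,2)+2$.
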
 
\begin{proof}
	Follows from Corollary~\ref{cor:restriction induced subgraph}.
\end{proof}

A graph $G$ that satisfies $\indm(G)=\mat(G)$ is called a \textit{Cameron-Walker graph}. Such graphs were studied from a commutative algebra point of view in \cite{HHKO}. The following proposition shows that the upper bound in Theorem~\ref{thm: upper bound for forest} is sharp.
\begin{proposition}\label{prop: upper bound justify}
	If $G$ is a Cameron-Walker forest, then for all $1\leq k \leq \mat(G)$, $\reg(I(G)^{[k]})=\aim(G,k)+k$. 
\end{proposition}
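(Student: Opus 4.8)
The plan is to combine the upper bound already furnished by Theorem~\ref{thm: upper bound for forest} with a matching lower bound coming from a non-vanishing Betti number. The first thing I would observe is that for a Cameron-Walker graph the $k$-admissable matching numbers all collapse to the same value. Indeed, since $\indm(G)=\mat(G)$, the chain of inequalities in part (3) of Remark~\ref{rk:properties of admissable matchings},
\[\indm(G)=\aim(G,1)\leq \aim(G,2)\leq \dots \leq \aim(G,\mat(G))\leq \mat(G),\]
is squeezed to equality, so that $\aim(G,k)=\mat(G)$ for every $1\leq k\leq \mat(G)$. Thus the assertion reduces to showing $\reg(I(G)^{[k]})=\mat(G)+k$, and the inequality $\reg(I(G)^{[k]})\leq \mat(G)+k$ is immediate from Theorem~\ref{thm: upper bound for forest}.

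For the reverse inequality I would produce a convenient induced subgraph. Choose a maximum induced matching $M$ of $G$; since $G$ is Cameron-Walker, $|M|=\indm(G)=\mat(G)=:m$. Let $H$ be the induced subgraph of $G$ on the vertex set $\bigcup_{e\in M}e$. Because $M$ is an induced matching, $H$ consists precisely of the edges of $M$ as pairwise disjoint $K_2$'s; in particular $M$ is a perfect matching of $H$ and is trivially a $1$-admissable matching of $H$.

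Now I would apply Lemma~\ref{lem:oberwolfach} to the graph $H$ with this matching $M$: for every $1\leq k\leq m$ it yields $b_{m-k+1,\,2m}(S/I(H)^{[k]})\neq 0$. Converting to regularity, this non-vanishing Betti number forces $\reg(S/I(H)^{[k]})\geq 2m-(m-k+1)=m+k-1$, and hence $\reg(I(H)^{[k]})\geq m+k$. Since $H$ is an induced subgraph of $G$, Corollary~\ref{cor:restriction induced subgraph} then gives
\[\reg(I(G)^{[k]})\geq \reg(I(H)^{[k]})\geq m+k=\aim(G,k)+k.\]
Combining this with the upper bound above yields the desired equality.

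As for difficulty, there is no substantial obstacle here: the argument is essentially an assembly of results established earlier in the paper. The only points requiring genuine care are verifying the collapse of the $\aim$ chain for Cameron-Walker graphs and checking that the selected $M$ really is a perfect matching of the induced subgraph $H$, so that the hypotheses of Lemma~\ref{lem:oberwolfach} are met; the remaining step, passing from the non-vanishing Betti number to the regularity lower bound via $\reg(I(H)^{[k]})=\reg(S/I(H)^{[k]})+1$, is routine bookkeeping.
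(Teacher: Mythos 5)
Your proposal is correct and follows essentially the same route as the paper: collapse the chain $\indm(G)=\aim(G,1)\leq\dots\leq\aim(G,\mat(G))\leq\mat(G)$ to get $\aim(G,k)=\indm(G)$, take the upper bound from Theorem~\ref{thm: upper bound for forest}, and obtain the lower bound by restricting to the induced subgraph $H$ on a maximum induced matching $M$ (which is then a perfect $1$-admissable matching of $H$) and applying Lemma~\ref{lem:oberwolfach} together with Corollary~\ref{cor:restriction induced subgraph}. Your write-up simply makes explicit the bookkeeping the paper leaves implicit.
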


\begin{proof}
	By Remark~\ref{rk:properties of admissable matchings} it follows that $\aim(G,k)=\indm(G)$ for all $1\leq k \leq \mat(G)$.  By Theorem~\ref{thm: upper bound for forest} we only need to show that $\reg(I(G)^{[k]})\geq\indm(G)+k$. Let $M$ be an induced matching of $G$ of maximum cardinality. Let $H$ be the induced subgraph of $G$ on $\cup_{e\in M}e$. Then $M$ is a perfect matching of $H$. The result then follows from Lemma~\ref{lem:oberwolfach} and Corollary~\ref{cor:restriction induced subgraph}.
\end{proof}

Using the structural classification of Cameron-Walker graphs \cite{HHKO}, for any given positive integer $m$, one can construct a Cameron-Walker tree $G$ with $\indm(G)=\mat(G)=m$. Figure~\ref{graph mat} illustrates an example with $m=2$.

Based on the results of this section and Macaulay2 \cite{mac2} computations, we expect that the upper bound in Theorem~\ref{thm: upper bound for forest} would give the exact formula for the regularity of squarefree powers of edge ideals of forests. Thus, we propose the following conjecture.

\begin{conjecture}\label{conjecture}
	If $G$ is a forest, then $\reg(I(G)^{[k]})= \aim(G,k)+k$ for every $1\leq k \leq \mat(G)$.
\end{conjecture}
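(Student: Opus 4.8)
Since Theorem~\ref{thm: upper bound for forest} already supplies $\reg(I(G)^{[k]})\le \aim(G,k)+k$, the whole content of the conjecture is the reverse inequality $\reg(I(G)^{[k]})\ge \aim(G,k)+k$, and my plan is to reduce this to a single Betti-number statement, exactly as Theorem~\ref{thm: second power formula} treats the case $k=2$. Let $M$ be a $k$-admissable matching of the forest $G$ with $|M|=\aim(G,k)=:m$, and let $H=G[\cup_{e\in M}e]$ be the induced subgraph on the vertices covered by $M$. Because $H$ is induced in $G$, the gap and forest conditions of Definition~\ref{def: k-admissable matching} are inherited, so $M$ is a perfect $k$-admissable matching of the forest $H$. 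By Corollary~\ref{cor:restriction induced subgraph} it then suffices to prove the following common generalization of Lemma~\ref{lem:oberwolfach} (the case $k=1$) and Lemma~\ref{lem:2 admissable perfect betti number} (the case $k=2$):

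\medskip
\noindent\textbf{Target Lemma.} \emph{If $H$ is a forest with a perfect $k$-admissable matching $M$, then $b_{|M|-k+1,\,2|M|}(S/I(H)^{[k]})\ne 0$.}
\medskip

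\noindent Indeed, granting this, $b_{m-k,2m}(I(H)^{[k]})=b_{m-k+1,2m}(S/I(H)^{[k]})\ne 0$ gives $\reg(I(H)^{[k]})\ge 2m-(m-k)=m+k$, and restriction transports this bound from $H$ to $G$.

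My approach to the Target Lemma is induction on $k$, working throughout in the top squarefree multidegree $x_1\cdots x_{2m}$ (note $2m$ is exactly the number of variables), where the governing principle is that any squarefree monomial ideal living in a polynomial ring that omits even one variable has vanishing $\Tor$ in this multidegree. Fix a $k$-admissable partition $M=M_1\cup\dots\cup M_q$. The gap condition forces no edges between distinct $V(M_i)$, and perfectness gives $V(H)=\bigsqcup_i V(M_i)$, so $H$ is the disjoint union of the forests $H_i=H[V(M_i)]$, each with perfect matching $M_i$ and $\sum_i|M_i|\le q+k-1$. If every part is a singleton then $M$ is $1$-admissable and Lemma~\ref{lem:oberwolfach} applies directly; otherwise some part, say $M_1$, has $|M_1|\ge 2$, and since $H_1$ is a forest with an edge it has a leaf $x$, whose unique $H$-neighbor $y$ satisfies $\{x,y\}\in M_1$ and makes $x$ a leaf of $H$. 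Then $M\setminus\{\{x,y\}\}$ is a perfect $(k-1)$-admissable matching of the forest $H-\{x,y\}$ via the partition $M_1\setminus\{\{x,y\}\},M_2,\dots,M_q$, whose size sequence sums to $(q+k-1)-1=q+(k-1)-1$. By Lemma~\ref{lem:colon by leaf edge}, $J:=I(H)^{[k]}:(xy)=I(H-\{x,y\})^{[k-1]}$, and the induction hypothesis yields $b_{m-k+1,\,2m-2}(S/J)\ne 0$. Setting $K=I(H)^{[k]}+(xy)$, the short exact sequence $0\to (S/J)(-2)\xrightarrow{\cdot xy} S/I(H)^{[k]}\to S/K\to 0$ produces
\[\Tor_{m-k+2}(S/K,\Bbbk)_{2m}\to \Tor_{m-k+1}((S/J)(-2),\Bbbk)_{2m}\to \Tor_{m-k+1}(S/I(H)^{[k]},\Bbbk)_{2m},\]
in which the middle term equals $b_{m-k+1,2m-2}(S/J)\ne 0$; hence the desired nonvanishing follows once $\Tor_{m-k+2}(S/K,\Bbbk)_{2m}=0$.

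The decisive difficulty is precisely this vanishing of the error term $\Tor_{m-k+2}(S/K)_{2m}$. For $k\le 2$ it reduced to a transparent variable count, because the overlapping part $M_1$ was forced to be a single path $P_4$, so that coloning $K$ by $y$ split it into two pieces each omitting a variable (the leaf $x$ on one side, two variables on the other after the degree shift). For general $k$ the component $H_1$ may be an arbitrary forest carrying a perfect matching, and $K:(y)$ decomposes as a mixed sum of several squarefree powers of vertex-deleted subgraphs, so the clean count is no longer automatic; controlling exactly which variables survive each colon and sum is the main obstacle. I would attack it either by a finer induction that tracks the eliminated variables at each step, or—more promisingly—by reformulating the Target Lemma through Hochster's formula as $b_{m-k+1,V}(S/I(H)^{[k]})=\dim_\Bbbk \widetilde{H}_{m+k-2}(\Delta_k)$, where $\Delta_k=\{\,W\subseteq V(H):\mat(H[W])\le k-1\,\}$. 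The assertion then becomes the purely topological statement $\widetilde{H}_{m+k-2}(\Delta_k)\ne 0$, which one could hope to establish by constructing an explicit nonzero cycle guided by the $k$-admissable partition; I expect the numerical inequality $\sum_i|M_i|\le q+k-1$ to be exactly the condition ensuring that this homology class does not bound.
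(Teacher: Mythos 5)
First, a point of calibration: the statement you are proving is stated in the paper as Conjecture~\ref{conjecture}, an \emph{open problem} supported only by Macaulay2 evidence; the paper contains no proof of it, so there is no argument to compare yours against except the partial results for $k\le 2$ (Theorem~\ref{thm: second power formula}) and for Cameron--Walker forests (Proposition~\ref{prop: upper bound justify}). Your reduction is correct and is exactly the paper's own template: the upper bound is Theorem~\ref{thm: upper bound for forest}, the passage to the induced subgraph $H$ on the vertices of a maximum $k$-admissable matching $M$ is legitimate (the gap and forest conditions of Definition~\ref{def: k-admissable matching} are inherited by induced subgraphs, and $M$ becomes perfect on $H$), and your Target Lemma is precisely the common generalization of Lemma~\ref{lem:oberwolfach} and Lemma~\ref{lem:2 admissable perfect betti number} that would settle the conjecture via Corollary~\ref{cor:restriction induced subgraph}. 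Your inductive scaffolding is also sound: the leaf $x$ of the component $H_1$ with $|M_1|\ge 2$ is matched to its unique neighbor $y$, Lemma~\ref{lem:colon by leaf edge} identifies the colon ideal, $M\setminus\{\{x,y\}\}$ is indeed a perfect $(k-1)$-admissable matching of $H-\{x,y\}$, and the long exact sequence correctly reduces everything to the vanishing $\Tor_{m-k+2}(S/K,\Bbbk)_{2m}=0$ for $K=I(H)^{[k]}+(xy)$.

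But that vanishing is a genuine gap, and you have located it accurately yourself: it is where the entire difficulty of the conjecture lives. In the paper's proof of Lemma~\ref{lem:2 admissable perfect betti number}, the $2$-admissable numerics force $|M_1|=2$ and $|M_i|=1$ for $i\ge 2$, so $H$ has exactly $r+1$ edges and the overlapping component is a path on four vertices; the vanishing of the error term then follows from a two-step variable count ($K+(x_2)$ omits $x_1$, and $K:(x_2)$ omits both $x_2$ and $x_4$). For $k\ge 3$ the component $H_1$ can be an arbitrary forest with a perfect matching of size up to $k$, the colon $K:(y)$ is a mixed sum of squarefree powers of several vertex-deleted subgraphs, and no analogous count is available; you offer two plausible directions (a finer induction tracking eliminated variables, and the Alexander-dual reformulation via Hochster's formula, which you state correctly: $b_{m-k+1,V}(S/I(H)^{[k]})=\dim_\Bbbk\widetilde H_{m+k-2}(\Delta_k)$ with $\Delta_k=\{W:\mat(H[W])\le k-1\}$) but carry out neither, and you give no construction of the nonbounding cycle you hope for. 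So what you have is a correct and well-organized reduction of the conjecture to a sharp, checkable statement --- valuable as a research plan, and consistent with how the authors themselves would likely attack it --- but not a proof: the conjecture remains open at exactly the step you flag.
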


%---------------------------------------------------------------
%  Characterization of squarefree powers with linear resolutions
%----------------------------------------------------------------

\section{Characterization of squarefree powers with linear resolutions}
	In this section, we will classify forests $G$ such that $I(G)^{[k]}$ has linear resolution. From Theorem~\ref{thm:reg of forest} it follows that $I(G)^k$ has linear resolution if and only if $\indm(G)=1$ when $G$ is a forest. So, for ordinary powers, such characterization does not depend on $k$, and the class of forests with induced matching number equal to one is rather small. On the other hand, we will see that linearity of resolution of $I(G)^{[k]}$ depends on both the forest $G$ and the integer $k$. 
	
	Let us briefly recall some definitions about simplicial complexes. A \textit{simplicial complex} $\Delta$ on a finite vertex set $V(\Delta)$ is a collection of subsets of $V(\Delta)$ such that if $F\in \Delta$, then every subset of $F$ is also in $\Delta$. Each element of $\Delta$ is called a \textit{face} of $\Delta$. If $F$ is a maximal face of $\Delta$ with respect to inclusion, then we say $F$ is a \textit{facet} of $\Delta$. We write $\Delta=\langle F_1,\dots ,F_r \rangle$ if $F_1,\dots ,F_r$ are all the facets of $\Delta$. We say $\Delta$ is \textit{connected} if for every pair of vertices $u$ and $v$ there exists a sequence $F_1,\dots, F_s$ of facets of $\Delta$ such that $u\in F_1$, $v\in F_s$ and $F_i\cap F_{i+1}\neq \emptyset$ for each $i=1,\dots ,s-1$.
\begin{definition}
	Let $I\subseteq S=\Bbbk[x_1,\dots ,x_n]$ be a monomial ideal and let $\alpha=(\alpha_1,\dots ,\alpha_n)\in \mathbb{N}^n$ be a multidegree. The \textit{upper-Koszul simplicial complex} associated with $I$ at degree $\alpha$, denoted by $K^\alpha (I)$, is the simplicial complex over $V=\{x_1,\dots ,x_n\}$ whose faces are:
	\[\displaystyle \Bigg\{W\subseteq V \, | \, \frac{x_1^{\alpha_1}\dots x_n^{\alpha_n}}{\prod_{u\in W}u} \in I\Bigg\}\]
\end{definition}
Hochster's formula (\cite[Theorem~1.34]{MS}) describe multigraded Betti numbers of a monomial ideal $I$ in terms of reduced homology groups of upper-Koszul simplicial complexes as follows:
\[b_{i,\alpha}(I)=\dim_\Bbbk \tilde{H}_{i-1}(K^\alpha (I); \Bbbk) \quad \text{for } i\geq 0 \text{ and } \alpha\in \mathbb{N}^n.\]

\begin{notation}\label{not:monomials sets}
	Let $m=x_1^{\alpha_1}\dots x_{n}^{\alpha_n}$ be a monomial in $\Bbbk[x_1,\dots ,x_n]$. To ease the notation, the monomial $m$ and the multidegree $(\alpha_1,\dots ,\alpha_n)$ will be used interchangeably. Moreover, if $m=x_{i_1}\dots x_{i_k}$ is squarefree, we will denote the set $\{x_{i_1},\dots ,x_{i_k}\}$ by $m$.
\end{notation}
\begin{lemma}\label{lem:describe facets}
	Let $I$ be a squarefree monomial ideal minimally generated by $m_1,\dots, m_t$. Let $m=\lcm(m_1,\dots ,m_t)$. Then $K^m(I)=\langle m/m_1, \dots ,m/m_t \rangle$.
\end{lemma}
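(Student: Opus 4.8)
The plan is to unwind the definition of the upper-Koszul complex at the multidegree $m$ and to match its faces directly against the subsets of the candidate facets $m/m_1,\dots,m/m_t$. First I would record the squarefreeness bookkeeping: since $I$ is a squarefree monomial ideal, each generator $m_i$ is squarefree, hence so is $m=\lcm(m_1,\dots,m_t)$. Using Notation~\ref{not:monomials sets} I may therefore regard $m$ and each $m_i$ as subsets of $V$ with $m_i\subseteq m$, and I may read $m/m_i$ as the set-theoretic difference $m\setminus m_i$. This identification is what makes the two sides of the asserted equality comparable as subsets of $V$.

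Next I would characterize when a subset $W\subseteq V$ is a face of $K^m(I)$, that is, when $m/\prod_{u\in W}u\in I$. The key preliminary observation is that this quotient lies in the polynomial ring $S$ at all only if every variable occurring in $W$ also occurs in $m$, i.e. only if $W\subseteq m$; if $W\not\subseteq m$, the quotient has a negative exponent, so it is not a monomial of $S$ and cannot belong to $I$. Assuming $W\subseteq m$, the quotient equals the squarefree monomial supported on $m\setminus W$, namely $\prod_{u\in m\setminus W}u$.

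The heart of the argument is then a chain of equivalences for $W\subseteq m$: the monomial $\prod_{u\in m\setminus W}u$ lies in $I$ if and only if it is divisible by some minimal generator $m_i$, if and only if $m_i\subseteq m\setminus W$ as sets, if and only if $W\subseteq m\setminus m_i=m/m_i$. Consequently $W$ is a face of $K^m(I)$ precisely when $W\subseteq m/m_i$ for some $i$, which is exactly the statement that $K^m(I)$ is the simplicial complex generated by $m/m_1,\dots,m/m_t$.

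Finally I would check that these generators are genuinely the facets, not merely a generating set. Since $m_1,\dots,m_t$ is a \emph{minimal} set of generators, no $m_i$ divides $m_j$ for $i\neq j$, so there is no containment $m_i\subseteq m_j$; complementing inside $m$ reverses inclusions, so no $m/m_i$ is contained in another, and the sets $m/m_i$ are pairwise distinct. Hence $m/m_1,\dots,m/m_t$ are exactly the maximal faces of $K^m(I)$, justifying the facet notation $\langle m/m_1,\dots,m/m_t\rangle$. I do not anticipate a serious obstacle; the only point demanding genuine care is the boundary case $W\not\subseteq m$, where one must argue that the formal quotient simply fails to be an element of $I$ rather than silently contributing a face.
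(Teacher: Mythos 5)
Your proof is correct and takes essentially the same route as the paper's: both arguments unwind the definition of $K^m(I)$, show that every face is contained in some $m/m_i$ because membership in $I$ means divisibility by some minimal generator, and deduce maximality of the $m/m_i$ from the minimality of the generating set. Your write-up is simply more explicit about the set-theoretic identifications and the degenerate case $W\not\subseteq m$, which the paper leaves implicit.
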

\begin{proof}
	By definition of the upper-Koszul simplicial complex, it is clear that each $m/m_i$ corresponds to a face of $K^m(I)$. Moreover, $m/m_i$ corresponds to a maximal face since $m_i$ is a minimal monomial generator. Lastly, if $u$ is a monomial that corresponds to a face of $K^m(I)$, then $m/u\in I$. Then there is a monomial $v$ such that $m/u=vm_i$ for some $i\in[t]$. This implies that the face $u$ is contained in the facet $m/m_i$.
\end{proof}
The following lemma is well-known in graph theory.
\begin{lemma}\label{aysels lemma1}
	Let $G$ be a graph with connected components $G_1,\dots ,G_r$. Then $G$ has a perfect matching if and only if $G_i$ has a perfect matching for each $i\in [r]$.
\end{lemma}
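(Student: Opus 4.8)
The plan is to prove both implications using the single structural fact that every edge of $G$ is contained in exactly one connected component. Recall that if $\{u,v\}\in E(G)$, then $u$ and $v$ are joined by a path in $G$ (namely the edge itself), so they lie in the same component $G_i$. Consequently every edge belongs to a unique $G_i$, and the edge sets $E(G_1),\dots,E(G_r)$ partition $E(G)$. This observation is the only ingredient needed; the rest is bookkeeping.

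For the forward direction, I would start from a perfect matching $M$ of $G$ and set $M_i=M\cap E(G_i)$ for each $i\in[r]$. By the observation above, $M=M_1\cup\dots\cup M_r$ with the $M_i$ pairwise disjoint. Fixing $i$ and an arbitrary vertex $v\in V(G_i)$, since $M$ covers $v$ there is an edge $e\in M$ with $v\in e$; as $v\in V(G_i)$ and edges cannot cross components, both endpoints of $e$ lie in $G_i$, whence $e\in E(G_i)$ and so $e\in M_i$. Thus $M_i$ covers every vertex of $G_i$, and being a subset of the matching $M$ it is itself a matching, so $M_i$ is a perfect matching of $G_i$.

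For the converse, I would take a perfect matching $M_i$ of each $G_i$ and form $M=M_1\cup\dots\cup M_r$. Because distinct components share no vertices, edges drawn from different $M_i$ are disjoint, and within a single $M_i$ they are disjoint since $M_i$ is a matching; hence $M$ is a matching of $G$. Every vertex $v\in V(G)$ lies in a unique $V(G_i)$ and is covered there by some edge of $M_i\subseteq M$, so $M$ is a perfect matching of $G$.

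I do not anticipate any real obstacle: the whole argument reduces to recording that edges respect the component decomposition. The only point requiring a little care is verifying, in each direction, that the relevant restriction or union is genuinely a matching (its edges pairwise disjoint), and this follows at once from the vertex-disjointness of the components together with the fact that each $M_i$ is a matching.
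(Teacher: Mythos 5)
Your proof is correct. Note that the paper itself offers no proof of this lemma --- it is stated as ``well-known in graph theory'' and used as a black box --- so your component-restriction argument (restrict a perfect matching to each $G_i$ for the forward direction, take the union of perfect matchings of the $G_i$ for the converse) is precisely the standard argument the paper implicitly relies on, and it is complete as written.
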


\begin{lemma}\label{aysels lemma2}
	Let $G$ be a graph which has a perfect matching. Then for any vertex $x$ of $G$, the graph $G-\{x\}$ has no perfect matching.
\end{lemma}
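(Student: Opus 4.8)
The plan is to run a pure parity argument on the number of vertices. The key observation is that any matching $N$ of a graph covers exactly $2|N|$ vertices, since its edges are pairwise disjoint and each edge has two endpoints. Consequently, if a graph admits a perfect matching $M$, then every vertex is covered, so $|V(G)| = 2|M|$ and in particular $|V(G)|$ is even. This is the only structural fact I need from the hypothesis that $G$ has a perfect matching.

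From here the conclusion is immediate: fix any vertex $x$ of $G$. The induced subgraph $G-\{x\}$ has $|V(G)|-1$ vertices, which is odd because $|V(G)|$ is even. If $G-\{x\}$ had a perfect matching $N$, then by the counting observation we would get $|V(G-\{x\})| = 2|N|$, forcing $|V(G-\{x\})|$ to be even, a contradiction. Hence $G-\{x\}$ has no perfect matching, and since $x$ was arbitrary this holds for every vertex.

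There is essentially no obstacle here; the whole content is the elementary remark that a perfect matching forces an even vertex count, so removing a single vertex destroys the possibility of a perfect matching for parity reasons. I would present it in two or three lines, making the parity step explicit so that the argument is self-contained and uniform over the choice of $x$.
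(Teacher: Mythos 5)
Your proof is correct and is exactly the argument the paper uses: the paper's proof is the one-line observation that a graph with a perfect matching has an even number of vertices, which your parity argument spells out in full. No differences to report.
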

\begin{proof}
	If a graph has perfect matching, then it has even number of vertices.
\end{proof}
\begin{lemma}\label{aysels lemma3}
	Let $G$ be a graph with connected components $G_1,\dots ,G_r$ where $r\geq 2$. Suppose that $G$ has a perfect matching. If $x\in V(G_1)$ and $y\in V(G_2)$, then $G-\{x,y\}$ has no perfect matching.
\end{lemma}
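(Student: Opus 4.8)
The plan is to exploit a parity obstruction coming from the fact that each connected component of $G$ has an even number of vertices. First I would invoke Lemma~\ref{aysels lemma1}: since $G$ has a perfect matching and $G_1,\dots,G_r$ are its connected components, each $G_i$ has a perfect matching, and so each $|V(G_i)|$ is even.

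Next I would record the structure of $G-\{x,y\}$. Because $G_1$ is a connected component of $G$, there are no edges of $G$ between $V(G_1)$ and $V(G)\setminus V(G_1)$; the same holds for $G_2$. Hence, as a graph, $G-\{x,y\}$ is the disjoint union (with no edges joining distinct pieces) of $G_1-\{x\}$, $G_2-\{y\}$, and the unchanged components $G_3,\dots ,G_r$. In particular, every connected component of $G_1-\{x\}$ is also a connected component of $G-\{x,y\}$, and together these account for exactly $|V(G_1)|-1$ vertices.

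The key step is then a parity count. Suppose for contradiction that $G-\{x,y\}$ has a perfect matching. Applying Lemma~\ref{aysels lemma1} to $G-\{x,y\}$, each of its connected components must itself have a perfect matching, and hence an even number of vertices (this is exactly the observation used in the proof of Lemma~\ref{aysels lemma2}). But the connected components coming from $G_1-\{x\}$ have vertex counts summing to $|V(G_1)|-1$, which is odd since $|V(G_1)|$ is even. Therefore at least one such component has an odd number of vertices and so cannot have a perfect matching, a contradiction. This completes the proof.

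There is no serious obstacle here; the only point requiring care is the observation that deleting $x$ and $y$ cannot create new adjacencies across the original components. Thus the parity defect introduced in $G_1$ by removing the single vertex $x$ is confined to $V(G_1)\setminus\{x\}$ and cannot be repaired by matching those vertices to vertices lying in other components of $G$.
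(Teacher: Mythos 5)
Your proof is correct and takes essentially the same route as the paper's: both decompose $G-\{x,y\}$ into the connected components of $G_1-\{x\}$, of $G_2-\{y\}$, and the untouched $G_3,\dots,G_r$, and both locate the obstruction in a component of $G_1-\{x\}$ that cannot have a perfect matching. The only cosmetic difference is that the paper obtains this component by applying Lemma~\ref{aysels lemma2} to $G_1-\{x\}$ and then Lemma~\ref{aysels lemma1}, whereas you unfold the same parity argument explicitly.
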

\begin{proof}
	By Lemma~\ref{aysels lemma1} each $G_i$ has a perfect matching. Let $U_1,\dots, U_t$ be the connected components of $G_1-\{x\}$ and $V_1,\dots, V_s$ be the connected components of $G_2-\{y\}$. Then the connected components of $G-\{x,y\}$ are $U_1,\dots ,U_t,V_1,\dots ,V_s, G_3, \dots ,G_r$. By Lemma~\ref{aysels lemma2} the graph $G_1-\{x\}$ has no perfect matching. Then by Lemma~\ref{aysels lemma1} there exists $U_j$ which has no perfect matching. Since $U_j$ is also a connected component of $G-\{x,y\}$, it follows that $G-\{x,y\}$ has no perfect matching.
\end{proof}
\begin{notation}\label{not: matching product}
	If $M=\{e_1,\dots ,e_k\}$ is a matching of $G$, then we will write $u_M$ for the squarefree monomial $\displaystyle e_1\dots e_k=\prod_{\substack{x_i\in e, \\ e\in M}}x_i$.
\end{notation}

\begin{lemma}\label{lem: connected components k admissable}
	Let $G$ be a graph with a $k$-admissable perfect matching $M$. Let $M=M_1\cup \dots \cup M_r$ be a $k$-admissable partition of $M$ for $G$. Using Notation~\ref{not: matching product} let $x | u_{M_i}$ and $y | u_{M_j}$ for some vertices $x$ and $y$ with $i\neq j$. Then $x$ and $y$ are in different connected components of $G$. 
\end{lemma}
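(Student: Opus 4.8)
The plan is to show that the vertex sets covered by the different blocks $M_1,\dots,M_r$ partition $V(G)$ and that no edge of $G$ runs between two distinct blocks; once this is in place, $x$ and $y$ lie in different blocks and hence in different connected components.

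First I would introduce, for each $\ell\in[r]$, the set $W_\ell=\{v\in V(G) : v\mid u_{M_\ell}\}$ of vertices covered by the edges of $M_\ell$. Because $M$ is a matching and the blocks $M_1,\dots,M_r$ are pairwise disjoint (condition~(2) of Definition~\ref{def: k-admissable matching}), the sets $W_1,\dots,W_r$ are pairwise disjoint. Since $M$ is a \emph{perfect} matching, every vertex of $G$ is covered by some edge of $M$, so $V(G)=W_1\cup\dots\cup W_r$. By hypothesis $x\in W_i$ and $y\in W_j$ with $i\neq j$.

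The key step is to prove that whenever $i\neq j$ there is no edge of $G$ joining a vertex of $W_i$ to a vertex of $W_j$. Suppose to the contrary that $\{u,v\}\in E(G)$ with $u\in W_i$ and $v\in W_j$. Then $u\in e_i$ and $v\in e_j$ for some edges $e_i\in M_i$ and $e_j\in M_j$. Since $W_i\cap W_j=\emptyset$ we have $v\notin e_i$ and $u\notin e_j$, so $\{u,v\}$ is distinct from both $e_i$ and $e_j$. Hence the induced subgraph of $G$ on $e_i\cup e_j$ contains the three distinct edges $e_i$, $e_j$ and $\{u,v\}$, contradicting condition~(3) of Definition~\ref{def: k-admissable matching}, which asserts that $\{e_i,e_j\}$ is a gap, i.e. that the induced subgraph on $e_i\cup e_j$ consists of exactly the two edges $e_i$ and $e_j$.

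Finally I would conclude that, since no edge of $G$ crosses between distinct $W_\ell$'s and the $W_\ell$ cover all of $V(G)$, every path in $G$ starting at a vertex of $W_i$ stays inside $W_i$; in particular the connected component of $x$ is contained in $W_i$ and that of $y$ in $W_j$. As $W_i\cap W_j=\emptyset$, the vertices $x$ and $y$ lie in different connected components of $G$. The argument is short, and the only point requiring real care is the contradiction step, where one must verify that $\{u,v\}$ is genuinely a \emph{new} edge, distinct from $e_i$ and $e_j$, before invoking the gap condition; this is exactly where the matching hypothesis (disjointness of the blocks, hence of the $W_\ell$) is used.
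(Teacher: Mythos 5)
Your proposal is correct and follows essentially the same route as the paper's proof: both show that the perfect matching forces every vertex into some block $W_\ell$, that condition~(3) (the gap condition) prevents any edge of $G$ from joining two distinct blocks, and hence that no path can connect $x$ to $y$. The only difference is one of detail—the paper states ``since $M$ is $k$-admissable, we get $p=q$'' without elaboration, whereas you carefully verify that a crossing edge $\{u,v\}$ would be a third edge in the induced subgraph on $e_i\cup e_j$, which is exactly the implicit justification.
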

\begin{proof}
	If $\{a,b\}$ is an edge of $G$, then since $M$ is a perfect matching, $a|u_{M_p}$ and $b|u_{M_q}$ for some $p$ and $q$. Since $M$ is $k$-admissable, we get $p=q$. Therefore there is no path in $G$ that connects $x$ and $y$.
\end{proof}

We use Notation~\ref{not: matching product} again to state the next lemma:

\begin{lemma}\label{lem: disconnected simplicial complex}
	Let $H$ be a graph with a $k$-admissable perfect matching $M$ of cardinality $k+1$. Then the simplicial complex $K^{u_M}(I(H)^{[k]})$ is disconnected.
\end{lemma}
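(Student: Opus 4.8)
The plan is to compute the facets of $K^{u_M}(I(H)^{[k]})$ explicitly via Lemma~\ref{lem:describe facets} and then read off disconnectedness from the block structure that the admissable partition of $M$ imposes on $H$.

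First I would record that the partition has at least two blocks. Writing $M=M_1\cup\dots\cup M_r$ for a $k$-admissable partition, condition (4) of Definition~\ref{def: k-admissable matching} says the sequence $(|M_1|,\dots,|M_r|)$ is $k$-admissable, so $\sum_{i}|M_i|\leq r+k-1$; since $|M|=k+1$ this forces $r\geq 2$. Next, because $M$ is a perfect matching, every vertex of $H$ is covered and $|V(H)|=2(k+1)$. As $|M|=k+1\geq 2$, deleting any single edge of $M$ produces a $k$-matching, so every vertex is covered by some generator; hence $u_M$ is the least common multiple of the minimal generators of $I(H)^{[k]}$. By Lemma~\ref{lem:describe facets} the facets of $K^{u_M}(I(H)^{[k]})$ are exactly the monomials $u_M/u_N$ as $N$ ranges over the $k$-matchings of $H$. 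Identifying a squarefree monomial with its vertex support (Notation~\ref{not: matching product}), the facet $u_M/u_N$ is the $2$-element set of vertices left uncovered by $N$, since $N$ covers $2k$ of the $2k+2$ vertices. Equivalently, a pair $\{a,b\}\subseteq V(H)$ is a facet if and only if $H-\{a,b\}$ admits a perfect matching, because a perfect matching of $H-\{a,b\}$ is precisely a $k$-matching of $H$ avoiding $a$ and $b$.

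The decisive step is to locate these facets relative to the blocks. Set $V_i=\{v : v\mid u_{M_i}\}$ for $i\in[r]$, so that $V(H)=V_1\sqcup\dots\sqcup V_r$. By Lemma~\ref{lem: connected components k admissable}, vertices lying in distinct blocks $V_i$ belong to distinct connected components of $H$; thus each $V_i$ is a union of connected components of $H$, and there are no edges of $H$ joining different blocks. I would then invoke Lemma~\ref{aysels lemma3}: if $a\in V_i$ and $b\in V_j$ with $i\neq j$, then $a$ and $b$ lie in different connected components of the perfectly matched graph $H$, so $H-\{a,b\}$ has no perfect matching. Combined with the facet description, this shows that no facet of $K^{u_M}(I(H)^{[k]})$ can meet two different blocks; every facet is contained in a single $V_i$.

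To finish, I would exhibit facets in two distinct blocks and conclude. For any edge $e=\{a,b\}\in M$ the set $M\setminus\{e\}$ is a $k$-matching, so $\{a,b\}$ is a facet. Choosing $e\in M_1$ and $e'\in M_2$ (possible since $r\geq 2$) gives a facet inside $V_1$ and a facet inside $V_2$. Since distinct blocks are disjoint and every facet lies in exactly one block, any chain of pairwise-intersecting facets remains inside a single $V_i$; hence a vertex of $e$ and a vertex of $e'$ cannot be joined by such a chain, and $K^{u_M}(I(H)^{[k]})$ is disconnected. I expect the structural step of the third paragraph to be the main obstacle: it requires combining the combinatorial reading of $K^{u_M}$ from Lemma~\ref{lem:describe facets} with the graph-theoretic fact (Lemma~\ref{aysels lemma3}) that deleting one vertex from each of two different components of a perfectly matched graph destroys all perfect matchings, together with the preliminary verification that $u_M$ really is the least common multiple of the generators so that Lemma~\ref{lem:describe facets} is applicable.
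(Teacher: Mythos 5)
Your proposal is correct and follows essentially the same route as the paper's proof: both use Lemma~\ref{lem:describe facets} to identify the facets as the pairs $\{a,b\}$ for which $H-\{a,b\}$ has a perfect matching, then combine Lemma~\ref{lem: connected components k admissable} with Lemma~\ref{aysels lemma3} to show every facet lies inside a single block of the partition, and finally take one edge from each of two blocks to exhibit disconnectedness. Your explicit verification that $u_M$ equals the least common multiple of the generators and that each block $V_i$ is a union of connected components merely spells out details the paper leaves implicit.
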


\begin{proof}
	Let $V(H)=\{x_1,\dots ,x_{2k+2}\}$. Then $u_M=x_1\dots x_{2k+2}$ and $u_M$ is the least common multiple of minimal monomial generators of $I(H)^{[k]}$. By Lemma~\ref{lem:describe facets}, observe that every facet of $K^{u_M}(I(H)^{[k]})$ consists of $2$ vertices. In fact, $\{x_i,x_j\}$ is a facet of $K^{u_M}(I(H)^{[k]})$ if and only if $H-\{x_i,x_j\}$ has a perfect matching. To see this, let $\mathcal{F}$ be the set of facets of $K^{u_M}(I(H)^{[k]})$. Then by Lemma~\ref{lem:describe facets}
	\begin{align*}
	\{x_i,x_j\}\in \mathcal{F}&\Longleftrightarrow x_ix_j=\frac{u_M}{u_N} \text{ for some matching } N \text{ of } H \text{ of size } k\\
	&\Longleftrightarrow x_ix_j=\frac{u_M}{u_N} \text{ for some matching } N \text{ of } H-\{x_i,x_j\} \text{ of size } k\\
	& \Longleftrightarrow u_N=V(H)\setminus \{x_i,x_j\} \text{ for some matching } N \text{ of } H-\{x_i,x_j\} \text{ of size } k\\
		& \Longleftrightarrow  H-\{x_i,x_j\} \text{ has a perfect matching}.
	\end{align*}
	Let $M=M_1 \cup M_2 \cup \dots \cup M_r$ be a $k$-admissable partition of $M$ for $H$. Then by definition of $k$-admissable sequence, we must have $r\geq 2$. Let $e_1\in M_1$ and $e_2\in M_2$. Then both $e_1$ and $e_2$ are facets of $K^{u_M}(I(H)^{[k]})$. We claim that there is no sequence of faces that connects a vertex of $e_1$ to a vertex of $e_2$. To this end, we will show that if $\{x_i,x_j\}$ is a facet of $K^{u_M}(I(H)^{[k]})$, then $x_ix_j | u_{M_q}$ for some $q\in [r]$. Assume for a contradiction there is a facet $\{x_i,x_j\}$ such that $x_i | u_{M_{i'}}$ and $x_j |u_{M_{j'}}$ for some $i'\neq j'$. Then by Lemma~\ref{lem: connected components k admissable} the vertices $x_i$ and $x_j$ belong to different connected components of $H$. Then by Lemma~\ref{aysels lemma3}, $H-\{x_i,x_j\}$ has no perfect matching, which is a contradiction.
\end{proof}

The authors of \cite{EHHM} classified all forests $G$ such that $I(G)^{[2]}$ has linear resolution, see \cite[Theorem~5.3]{EHHM}. Our next theorem solves this classification problem for any squarefree power.
\begin{theorem}\label{thm:characterize linear resolution}
	Let $k\geq 1$ be an integer and let $G$ be a forest with $\mat(G)\geq k$. Then $\reg(I(G)^{[k]})=2k$ if and only if $\aim(G,k)=k$.
\end{theorem}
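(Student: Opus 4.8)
The plan is to prove both directions of the equivalence, keeping in mind that $\reg(I(G)^{[k]}) = 2k$ means exactly that $I(G)^{[k]}$ has a linear resolution, since $I(G)^{[k]}$ is generated in degree $2k$. For the ``if'' direction, suppose $\aim(G,k) = k$. By Theorem~\ref{thm: upper bound for forest} we immediately have $\reg(I(G)^{[k]}) \leq \aim(G,k) + k = 2k$. Since $I(G)^{[k]}$ is a nonzero ideal generated in degree $2k$, we always have $\reg(I(G)^{[k]}) \geq 2k$. Combining these gives $\reg(I(G)^{[k]}) = 2k$, so this direction is essentially free.

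The substantive direction is the contrapositive of ``only if'': assuming $\aim(G,k) \geq k+1$, I would produce a nonvanishing Betti number off the linear strand, forcing $\reg(I(G)^{[k]}) > 2k$. First I would invoke the monotonicity $\aim(G,1) \leq \aim(G,2) \leq \dots$ from Remark~\ref{rk:properties of admissable matchings} together with Lemma~\ref{lem:every nonempty subset is k-admissable} to extract a $k$-admissable matching $M$ of $G$ with $|M| = k+1$ exactly (any larger $k$-admissable matching contains one of size $k+1$ by passing to a subset, and Lemma~\ref{lem:every nonempty subset is k-admissable} guarantees this subset is again $k$-admissable). Then I would pass to the induced subgraph $H$ of $G$ on $\bigcup_{e \in M} e$, so that $M$ is a $k$-admissable \emph{perfect} matching of $H$ of cardinality $k+1$. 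The goal is now to show $\reg(I(H)^{[k]}) > 2k$, because Corollary~\ref{cor:restriction induced subgraph} then gives $\reg(I(G)^{[k]}) \geq \reg(I(H)^{[k]}) > 2k$.

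To show $\reg(I(H)^{[k]}) > 2k$, I would use Hochster's formula together with the machinery built up in Lemmas~\ref{lem:describe facets} through~\ref{lem: disconnected simplicial complex}. Set $u_M = \prod_{e \in M} e = x_1 \cdots x_{2k+2}$, the least common multiple of the generators of $I(H)^{[k]}$; this monomial has degree $2k+2 = 2(k+1)$. By Lemma~\ref{lem: disconnected simplicial complex}, the upper-Koszul complex $K^{u_M}(I(H)^{[k]})$ is disconnected, hence $\tilde{H}_0(K^{u_M}(I(H)^{[k]}); \Bbbk) \neq 0$. Applying Hochster's formula $b_{i,\alpha}(I) = \dim_\Bbbk \tilde{H}_{i-1}(K^\alpha(I); \Bbbk)$ with $i=1$ and $\alpha = u_M$ yields $b_{1, u_M}(I(H)^{[k]}) \neq 0$. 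The homological degree is $i=1$ and the internal degree is $|u_M| = 2k+2$, so this Betti number contributes $2k+2 - 1 = 2k+1$ to the regularity; thus $\reg(I(H)^{[k]}) \geq 2k+1 > 2k$, completing the contrapositive.

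The main obstacle is ensuring that Lemma~\ref{lem: disconnected simplicial complex} applies cleanly, i.e.\ that we really have a $k$-admissable perfect matching of cardinality \emph{exactly} $k+1$ on the induced subgraph $H$. The delicate point is that restricting to $H = \bigcup_{e\in M} e$ must preserve $k$-admissability of $M$: the gap conditions (3) and forest conditions (5) of Definition~\ref{def: k-admissable matching} are stated in terms of induced subgraphs, so one must check that a $k$-admissable partition of $M$ for $G$ remains a valid $k$-admissable partition when $G$ is replaced by $H$, which follows because the induced subgraph of $H$ on any subset of $V(H)$ agrees with that of $G$. Once this compatibility is verified, everything else is a direct application of the already-established lemmas, and the disconnectedness of the Koszul complex does the real work of detecting the nonlinear syzygy.
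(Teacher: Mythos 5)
Your proposal is correct and takes essentially the same route as the paper's own proof: the ``if'' direction follows from Theorem~\ref{thm: upper bound for forest}, and for the ``only if'' direction both arguments extract a $k$-admissable matching of size $k+1$ via Lemma~\ref{lem:every nonempty subset is k-admissable}, restrict to the induced subgraph $H$ on its vertices, and combine Lemma~\ref{lem: disconnected simplicial complex} with Hochster's formula to obtain $b_{1,2k+2}(I(H)^{[k]})\neq 0$, hence $\reg(I(H)^{[k]})\geq 2k+1$ and then $\reg(I(G)^{[k]})>2k$ by Corollary~\ref{cor:restriction induced subgraph}. Your explicit verification that $k$-admissability of $M$ is preserved when passing from $G$ to $H$ is a point the paper leaves implicit, and it is a worthwhile clarification since Lemma~\ref{lem: disconnected simplicial complex} requires $M$ to be $k$-admissable as a matching of $H$ itself.
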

\begin{proof}
	If $\aim(G,k)=k$, then by Theorem~\ref{thm: upper bound for forest} it follows that $\reg(I(G)^{[k]})=2k$. Suppose that $\aim(G,k)\neq k$. Then $\aim(G,k)>k$ by Remark~\ref{rk:properties of admissable matchings}. Let $N$ be a $k$-admissable matching of $G$ of cardinality $\aim(G,k).$ Then by Lemma~\ref{lem:every nonempty subset is k-admissable} there exists a $k$-admissable matching $M$ of $G$ which has $k+1$ elements. Let $H$ be the induced subgraph of $G$ on $\cup_{e\in M}e$. Then $M$ is a perfect matching of $H$. By Corollary~\ref{cor:restriction induced subgraph} we get $\reg(I(G)^{[k]})\geq \reg(I(H)^{[k]})$. By Lemma~\ref{lem: disconnected simplicial complex}, the simplicial complex $K^{u_M}(I(H)^{[k]})$ is disconnected. Then $\dim_\Bbbk\tilde{H}_0(K^{u_M}(I(H)^{[k]}); \Bbbk)>0$. From the Hochster's formula, we get $b_{1,2k+2}(I(H)^{[k]})\neq 0$ and thus $\reg(I(H)^{[k]})\geq 2k+1$.
\end{proof}
Herzog, Hibi and Zheng \cite{HHZ2} proved that if an edge ideal $I(G)$ has linear resolution, then $I(G)^k$ has linear resolution for all $k\geq 1$. It is an open problem to determine for a given integer $k$, whether linearity of resolution of $I(G)^k$ implies the same property for $I(G)^{k+1}$. Relevantly, Theorem~\ref{thm:characterize linear resolution} has an interesting consequence regarding linear resolutions of consecutive squarefree powers:
\begin{corollary}
	Let $G$ be a forest and $1\leq k < \mat(G)$. If $I(G)^{[k]}$ has linear resolution, then $I(G)^{[k+1]}$ has linear resolution.
\end{corollary}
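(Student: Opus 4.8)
The plan is to reduce the statement about linear resolutions to a purely combinatorial inequality between $\aim(G,k)$ and $\aim(G,k+1)$, and then invoke the monotonicity results of Section~3. First I would record the standard observation that $I(G)^{[k]}$ is generated entirely in degree $2k$, since every minimal generator is a product $e_1\cdots e_k$ of $k$ pairwise disjoint edges. Hence $I(G)^{[k]}$ has a linear resolution if and only if $\reg(I(G)^{[k]})=2k$, and likewise $I(G)^{[k+1]}$ has a linear resolution if and only if $\reg(I(G)^{[k+1]})=2(k+1)$. The hypothesis $1\leq k<\mat(G)$ guarantees $\mat(G)\geq k+1$, so Theorem~\ref{thm:characterize linear resolution} is applicable at both levels $k$ and $k+1$.

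Applying that theorem translates the hypothesis and the desired conclusion into statements about the $k$-admissable matching number: the ideal $I(G)^{[k]}$ has a linear resolution exactly when $\aim(G,k)=k$, while $I(G)^{[k+1]}$ has a linear resolution exactly when $\aim(G,k+1)=k+1$. Thus the corollary is equivalent to the implication
\[\aim(G,k)=k \implies \aim(G,k+1)=k+1.\]

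This last implication follows by sandwiching $\aim(G,k+1)$ between two bounds. The upper bound comes from Lemma~\ref{lem:aim(G,k) cannot exceed}, which gives $\aim(G,k+1)\leq \aim(G,k)+1=k+1$ under the hypothesis $\aim(G,k)=k$. The matching lower bound comes from the fact that $G$ is a forest with $\mat(G)\geq k+1$: by part~(2) of Remark~\ref{rk:properties of admissable matchings} every non-empty matching of a forest realizes its own cardinality as an admissable number, so $\aim(G,k+1)\geq k+1$. Combining the two inequalities forces $\aim(G,k+1)=k+1$, completing the proof. I do not expect any genuine obstacle here: the only point requiring care is the initial identification of \emph{linear resolution} with the exact regularity value $2k$, after which the result is a direct chain of the already-established Theorem~\ref{thm:characterize linear resolution}, Lemma~\ref{lem:aim(G,k) cannot exceed}, and Remark~\ref{rk:properties of admissable matchings}.
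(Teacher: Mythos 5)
Your proposal is correct and follows essentially the same route as the paper: both convert the hypothesis to $\aim(G,k)=k$ via Theorem~\ref{thm:characterize linear resolution} and then use Lemma~\ref{lem:aim(G,k) cannot exceed} to control the level-$(k+1)$ invariant. The only cosmetic difference is that you close by re-invoking Theorem~\ref{thm:characterize linear resolution} at level $k+1$ (together with the lower bound $\aim(G,k+1)\geq k+1$ from Remark~\ref{rk:properties of admissable matchings}), whereas the paper applies Theorem~\ref{thm: upper bound for forest} directly to bound $\reg(I(G)^{[k+1]})$ by $2k+2$ --- but since the ``if'' direction of the characterization theorem is itself just that upper bound, the two arguments coincide in substance.
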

\begin{proof}
	Suppose that $\reg(I(G)^{[k]})=2k$. Then by Theorem~\ref{thm:characterize linear resolution} we get $\aim(G,k)=k$. Now, observe that
	\[
	\begin{array}{cclr}
	\reg(I(G)^{[k+1]}) & \leq & \aim(G,k+1)+k+1 & \text{(Theorem~\ref{thm: upper bound for forest}) }  \\
	& \leq & \aim(G,k)+1+k+1 & \text{(Lemma~\ref{lem:aim(G,k) cannot exceed})} \\
	& = & 2k+2 &  
	\end{array}
	\]
	and thus $I(G)^{[k+1]}$ has linear resolution.
\end{proof}

	\section*{Acknowledgment}
	We thank the anonymous referee for her/his careful reading of the paper and helpful comments.
	
%In particular, one can observe that by Remark~\ref{rk:properties of admissable matchings}, Conjecture~\ref{conjecture} implies the following statements about the regularity of consecutive squarefree powers.
%\begin{itemize}
%	\item  If $G$ is a forest, then $1\leq \reg(I(G)^{[k+1]})-\reg(I(G)^{[k]})\leq 2$ for all $1\leq k< \mat(G)$.
%	\item If $G$ is a forest, then the sequence $\big\{\reg(I(G)^{[k]})-2k\big\}_{k=1}^{k=\mat(G)}$ is non-increasing.
%\end{itemize}

\end{document}